\NeedsTeXFormat{LaTeX2e}
\documentclass[11pt,a4paper]{amsart}
\usepackage{
amssymb,
amsmath}

\synctex=1
\usepackage[bookmarks,colorlinks,pagebackref]{hyperref}

\textwidth=16.00cm
\textheight=22.50cm
\topmargin=0.00cm
\oddsidemargin=0.00cm
\evensidemargin=0.00cm
\headheight=14.4pt
\headsep=1cm
\numberwithin{equation}{section}
\hyphenation{semi-stable}
\emergencystretch=10pt

%% theoremlike environments

\newtheorem{theorem}{Theorem}[section]
\newtheorem{lemma}[theorem]{Lemma}
\newtheorem{proposition}[theorem]{Proposition}
\newtheorem{corollary}[theorem]{Corollary}

\theoremstyle{definition}

\newtheorem{problem}[theorem]{Problem}
\newtheorem{remark}[theorem]{Remark}

\newtheorem{notation}[theorem]{Notation}

\newcommand\Ext{\operatorname{Ext}}

\newcommand\Coker{\operatorname{Coker}}

\newcommand{\au}{\underline a}

\newcommand{\q}{\mathfrak{q}}

\author[Azeem Khadam]{M. Azeem Khadam}
\address{Abdus Salam School of Mathematical Sciences, GC University, Lahore, Pakistan}
\email{azeemkhadam@gmail.com}
\thanks{The author is grateful to DAAD and HEC, Pakistan for the support of his PhD research under 
	grant number 91524811 and 112-21480-2PS1-015 (50021731) respectively.}

\title[Multiplicity]{On Regular Sequences in the Form Module
	with Applications to Local B\'ezout Inequalities}

\begin{document}

\subjclass[2010]
{Primary: 13H15; Secondary: 13D40}
\keywords{Regular sequence, Koszul complex, multiplicity, B\'ezout's theorem}

\begin{abstract}
	Let $\q$ denote an ideal in a Noetherian local ring $(A,\mathfrak{m})$. Let $\au=a_1,\ldots,a_d \subset \q$ denote a system of parameters in a finitely generated $A$-module $M$. This note investigate an improvement of the inequality $c_1\cdot \ldots \cdot c_d \cdot e_0(\q;M) \leq \ell_A(M/\au\,M)$, where $c_i$ denote the initial degrees of $a_i$ in the form ring $G_A(\q)$. 
	To this end, there is an investigation of regular sequences in the form module $G_M(\q)$ by homology of a factor complex of the Koszul complex. In a particular case, there is a discussion of classical local B\'ezout inequality in the affine $d$-space $\mathbb{A}^d_k$.
\end{abstract}

%\subjclass[2010]
%{Primary: 13H15; Secondary: 13D40}
%\keywords{Regular sequence, Koszul complex, multiplicity, %B\'ezout's theorem}

\maketitle

\section{Introduction}

The importance of an improvement of the inequality $\ell_A(M / \au M) \geq c_1 \cdot \ldots \cdot c_d \cdot e_0(\q; M)$ has to do with B\'ezout's Theorem in the projective plane. 
Let $C = V(F), D = V(G) \subset \mathbb{P}^2_{\Bbbk}, \Bbbk = \overline{\Bbbk,}$ be two curves in the projective plane without a common component. Then 
\[
\sum_{P \in C\cap D} \mu(P;C,D) = \deg C \cdot \deg D,
\]
where $\mu(P;C,D)$ denotes the local intersection multiplicity of $P$ in $C \cap D$. In a particular case when $P$ is the origin, it follows that $\mu(P;C,D) = \ell_A(A / (f, g)A)$, where $A = \Bbbk[x,y]_{(x,y)}$ and $f,g$ denote 
the equations in $A$. Note that $\ell_A(A / (f,g)A) = e_0(f,g;A)$ as $A$ is a regular local ring. Since $C,D$ have no component in common, $\{f,g\}$ forms a system of parameters in $A$. Then 
\[
e_0(f,g;A) \geq c\cdot d \cdot e_0(\mathfrak{m};A) = c\cdot d,
\]
since $e_0(\mathfrak{m};A) = 1$, called the local B\'ezout inequality in the affine plane $\mathbb{A}_{\Bbbk}^2$. Here $c, d$ denote the initial degree of $f,g$ respectively. This 
estimate is well-known (see for instance \cite{BK} or \cite{F}) and proved by resultants or Puiseux expansions. 
Moreover, equality holds if and only if $C,D$ intersect transversally at the origin. In other words $f^{\star},g^{\star}$, 
the initial forms of $f,g$ in the form ring $G_A(\mathfrak{m}) \cong \Bbbk[X,Y]$, 
is a homogeneous system of parameters. 

First Byd\u{z}ovsk\'y \cite{B} and most recently Bo\u{d}a-Schenzel \cite{BSCH} presented an improvement of the local B\'ezout inequality. More precisely,
\[
e_0(f,g;A) \geq c \cdot d + t,
\]
where $t$ is the number of common tangents of $f,g$ at origin when counted with multiplicities.
We generalized their result to an arbitrary situation.
To this end, let $\q$ denote an ideal in a Noetherian local ring $(A, \mathfrak{m}, \Bbbk)$ such that $\ell_A(M / \q M)$ is finite for a finitely generated $A$-module $M$. Let $\au = a_1,\ldots,a_d \subseteq \q$ denote a system of parameters of $M$ such that $a_i \in \q^{c_i} \setminus \q^{c_i+1}$, $c_i > 0$, for $i=1,\ldots,d$. Then we have the following result.

\begin{theorem}\label{th-1}
		(Cor. \ref{cor-mul}) With the previous notations, if $\au^\star G_A(\q)$ contains a $G_M(\q)$-regular sequence $\underline{b}^\star=b_1^\star,\ldots,b_{d-1}^\star$
		and we choose $b_i$ for $i=1,\ldots,d-1$ as in Lemma \ref{lem4}. Then
		\[
		\ell_A(M/\au M) \geq c\cdot e_0(\q;M)+ \mathfrak{x}
		\]
		where $c=c_1\cdot \ldots \cdot c_d$ and $\mathfrak{x} = \ell_A([\Ext^{d - 1}_{G_A(\mathfrak{q})}(G_A(\mathfrak{q}) / \underline{a}^\star G_A(\mathfrak{q}) , G_M(\mathfrak{q}))]_{n-\overline{c}-1})$ is a constant for all $n \gg 0$ and $\overline{c}=c_1+\ldots+c_d.$
\end{theorem}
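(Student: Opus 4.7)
The strategy is to improve the classical inequality $\ell_A(M/\underline{a}M) \geq c\cdot e_0(\mathfrak{q};M)$ by extracting a ``defect term'' from the homology of a carefully chosen factor complex of $K_\bullet(\underline{a}^\star;G_M(\mathfrak{q}))$. The hypothesis that $d-1$ of the initial forms generate a $G_M(\mathfrak{q})$-regular sequence is what forces the correction to be captured by a single Ext group in a single cohomological degree, and the grading of the form ring is what produces the shift $n-\overline{c}-1$.

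My first step would be to revisit the derivation of the length inequality in a slightly sharper form,
\[
\ell_A(M/\underline{a}M) \geq \sum_{n\geq 0}\ell_A\bigl([G_M(\mathfrak{q})/\underline{a}^\star G_M(\mathfrak{q})]_n\bigr),
\]
obtained by filtering $M/\underline{a}M$ by the images of $\mathfrak{q}^n M$ and comparing successive quotients against the graded pieces of $G_M(\mathfrak{q})/\underline{a}^\star G_M(\mathfrak{q})$. Under the regular-sequence hypothesis on $\underline{b}^\star$ the module $G_M(\mathfrak{q})/\underline{a}^\star G_M(\mathfrak{q})$ has Krull dimension at most one, so its Hilbert function is eventually constant and the right-hand side decomposes into a polynomial ``leading'' contribution and an asymptotic tail.

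Next I would invoke Lemma \ref{lem4}, which (judging from its advertised role) provides the appropriate lifts $b_i$ of the $b_i^\star$ and identifies a factor complex of $K_\bullet(\underline{a}^\star;G_M(\mathfrak{q}))$ whose only nontrivial homology is
\[
\Ext^{d-1}_{G_A(\mathfrak{q})}\bigl(G_A(\mathfrak{q})/\underline{a}^\star G_A(\mathfrak{q}),G_M(\mathfrak{q})\bigr).
\]
Using the short exact sequences attached to that factor complex one can split the graded Hilbert function of $G_M(\mathfrak{q})/\underline{a}^\star G_M(\mathfrak{q})$ into a polynomial part whose total sum is $c\cdot e_0(\mathfrak{q};M)$ and a correction of the form $[\Ext^{d-1}]_{n-\overline{c}-1}$ for $n\gg 0$. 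The shift $\overline{c}+1$ is the combined offset from the Koszul complex (whose top wedge power lives in internal degree $\overline{c}=c_1+\cdots+c_d$) together with the single cohomological shift introduced by the factor-complex truncation; the appearance of $d-1$ as the cohomological index matches the length of the postulated regular sequence.

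The main obstacle, as I see it, is the second step: constructing the precise factor complex, checking that its homology vanishes outside cohomological degree $d-1$ (this is where the $G_M(\mathfrak{q})$-regularity of $\underline{b}^\star$ is essentially used), and matching the internal grading so that the Ext module appears in exactly the shift $n-\overline{c}-1$. The rest of the proof is careful bookkeeping: comparing alternating sums of Hilbert functions with the Hilbert polynomial $c\cdot e_0(\mathfrak{q};M)$ and invoking the eventual-constancy of the graded Ext, guaranteed by $\dim G_M(\mathfrak{q})/\underline{a}^\star G_M(\mathfrak{q})\leq 1$, to extract the well-defined constant $\mathfrak{x}$.
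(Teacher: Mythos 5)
Your opening inequality is reversed, and this sinks the plan. Filtering $M/\au M$ by the images of the $\q^nM$ gives successive quotients $(\q^nM+\au M)/(\q^{n+1}M+\au M)$, and each of these is a homomorphic \emph{image} of $[G_M(\q)/\au^\star G_M(\q)]_n = \q^nM/(\sum_{i=1}^d a_i\q^{n-c_i}M+\q^{n+1}M)$, since $\sum_{i=1}^d a_i\q^{n-c_i}M \subseteq \au M\cap \q^nM$. So the filtration comparison yields $\ell_A(M/\au M)\leq \sum_{n\geq 0}\ell_A([G_M(\q)/\au^\star G_M(\q)]_n)$ --- an upper bound, where the theorem needs a lower bound. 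Worse, the hypothesis does not make $\au^\star$ a homogeneous system of parameters for $G_M(\q)$: it only places a regular sequence of length $d-1$ inside $\au^\star G_A(\q)$, so $G_M(\q)/\au^\star G_M(\q)$ may have dimension $1$, in which case its Hilbert function is eventually a positive constant (equal to $\mathfrak{x}$, essentially by Proposition \ref{prop-mul}) and your right-hand side is infinite. This is exactly the interesting case: in the B\'ezout application (Remark \ref{Bezout}), common tangents mean $f^\star,g^\star$ share a factor and $\ell_A(B/(f^\star,g^\star)B)=\infty$. Hence the proposed split of the sum into a polynomial part totalling $c\cdot e_0(\q;M)$ plus a finite tail cannot exist whenever $\mathfrak{x}>0$; and when $\mathfrak{x}=0$ your argument at best recovers the classical bound.

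The missing idea is to work degreewise with a finite complex instead of summing over all degrees. The paper fixes $n\gg 0$ and computes the Euler characteristic of $\mathcal{L}_{\bullet}(\au,\q,M;n)$ (a quotient of the Koszul complex of $M$ over $A$, not a factor complex of $K_{\bullet}(\au^\star;G_M(\q))$) in two ways: the alternating sum of the lengths $\ell_A(M/\q^{n-c_{j_1}-\ldots-c_{j_i}}M)$ is a weighted $d$-fold difference of the Hilbert--Samuel polynomial, hence eventually the constant $c\cdot e_0(\q;M)$, while by Theorem \ref{th-formula} the hypothesis kills $L_i(\au,\q,M;n)$ for $i>1$, so the homological Euler characteristic is $\ell_A(M/\au M)-\ell_A(L_1(\au,\q,M;n))$. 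This gives the exact equality $\ell_A(M/\au M)=c\cdot e_0(\q;M)+\ell_A(L_1(\au,\q,M;n))$ of Theorem \ref{th-mul}; Proposition \ref{prop-mul} then splits $\ell_A(L_1)$ as $\mathfrak{x}+\ell_n$ with $\ell_n\geq 0$, using the explicit colon-intersection formula for $L_1$ from Theorem \ref{th-formula} together with the isomorphism $\underline{b}^\star G_M(\q):\au^\star/\underline{b}^\star G_M(\q)\cong \Ext^{d-1}_{G_A(\q)}(G_A(\q)/\au^\star G_A(\q),G_M(\q))[-\beta]$, which also shows $\mathfrak{x}$ is independent of the choice of $\underline{b}^\star$. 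Two further misreadings in your sketch: Lemma \ref{lem4} constructs no factor complex --- it merely lifts the $b_i^\star$ to elements $b_i'$ with $(b_1',\ldots,b_{d-1}')A\subseteq \au A$, which is what makes the colon formula applicable; and your plan never produces the nonnegative remainder $\ell_n$, which is the very term that converts the Euler-characteristic equality into the asserted inequality.
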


There are also few applications of the previous theorem. We refer Section 5 and 6.

Another motivation for the author was a recent preprint \cite{KSCH}. In this preprint, the authors define a generalized Koszul complex $\mathcal{L}_{\bullet}(\au,\q,M;n)$ which is factor complex of Koszul complex (see definition \ref{kos-1}). There are criteria concerning regular sequences in a finitely generated $A$-module $M$, which deal the vanishing and rigidity of the Koszul homology (see \cite{BH} and \cite{hM}). We present the similar criteria concerning regular sequences in the form module $G_M(\q)$ in terms of the homology modules $L_i(\au,\q,M;n)$ of the complex $\mathcal{L}_{\bullet}(\au,\q,M;n)$. More precisely, let $\au=a_1,\ldots,a_d$ and $\underline{b}=b_1,\ldots,b_t$ denote two systems of elements of $A$. There is a following theorem.

\begin{theorem}\label{th-2}
	(1)(Theorem \ref{th-reg}) With the previous notations, the following are equivalent:
	\begin{itemize}
		\item[(a)] $\au^\star=a_1^\star,\ldots,a_d^\star$ is $G_M(\q)$-regular sequence.
		\item[(b)] $L_1(\au,\q,M;n)=0$ for all $n$.
		\item[(c)] $L_i(\au,\q,M;n)=0$ for all $i>0$, for all $n$. 	
	\end{itemize}
	(2)(Theorem \ref{th-formula}, Prop. \ref{prop-formula})With the previous notations, if $\au^\star G_A(\q)$ contains a $G_M(\q)$-regular sequence $\underline{b}^\star=b_1^\star,\ldots,b_t^\star$, then
	\[
	L_i(\au,\q,M;n)=0 \text{ for all } i>t, \text{ for all } n.
	\]
	The converse is also true.
	Moreover, given $\underline{b}^\star=b_1^\star,\ldots,b_t^\star$ we choose $b_i$ for $i=1,\ldots,t$ as in Lemma \ref{lem4}, there is an isomorphism
	\[
	L_{d-t}(\au,\q,M;n) \cong \cap_{i=1}^d(\underline{b},\q^{n+\beta-\overline{c_i}})M :_M a_i / (\underline{b},\q^{n+\beta-\overline{c}})M.
	\]	
\end{theorem}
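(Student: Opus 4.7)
The plan is to prove Theorem \ref{th-2} by induction on $d$, driven by the long exact sequence in $L$-homology arising from a short exact sequence of $\mathcal{L}_\bullet$-complexes, and to deduce Part (2) by reduction modulo the $G_M(\q)$-regular sequence $\underline{b}^\star$.

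For Part (1), I would first handle $d=1$ directly: the module $L_1(a_1, \q, M; n)$ unravels, from the definition of $\mathcal{L}_\bullet$, as the degree-$n$ piece of $\Ann_{G_M(\q)}(a_1^\star)$, so vanishing for all $n$ is equivalent to $a_1^\star$ being a nonzerodivisor on $G_M(\q)$. For general $d$, I would set up a short exact sequence of complexes
\[
0 \to \mathcal{L}_\bullet(a_1,\ldots,a_{d-1},\q,M; n-c_d)[-1] \to \mathcal{L}_\bullet(\au,\q,M;n) \to \mathcal{L}_\bullet(a_1,\ldots,a_{d-1},\q,M;n) \to 0
\]
analogous to the filtration that splits the Koszul complex by its last variable. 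The resulting long exact sequence expresses the connecting map essentially as multiplication by $a_d^\star$. A standard diagram chase, mirroring the classical Koszul-theoretic characterization of regular sequences, gives (b) $\Rightarrow$ (a): vanishing of $L_1(\au,\q,M;n)$ for all $n$ forces the vanishing of $L_1(a_1,\ldots,a_{d-1},\q,M;n)$, triggering the induction hypothesis and identifying $a_d^\star$ as a nonzerodivisor on the appropriate quotient. The implication (a) $\Rightarrow$ (c) is a parallel induction using acyclicity of the Koszul complex on a regular sequence, and (c) $\Rightarrow$ (b) is immediate.

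For the vanishing in Part (2), I would induct on $t$. The case $t=0$ is trivial since $\mathcal{L}_\bullet$ is concentrated in degrees $0,\ldots,d$. For the inductive step, Part (1) applied to $b_1^\star$ gives $L_i(b_1,\q,M;n) = 0$ for all $i \geq 1$ and all $n$; comparing $\mathcal{L}_\bullet(\au,\q,M;n)$ with a complex built from $\au$ modulo the regular element $b_1^\star$ then drops one homological degree. Iterating $t$ times yields the required vanishing range. The converse assertion (Prop. \ref{prop-formula}) is obtained by reversing this reduction: the vanishing pattern of the $L_i$ supplies the existence of a regular sequence via the equivalence of Part (1).

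The main obstacle, and the most technical step, is the explicit isomorphism for $L_{d-t}(\au,\q,M;n)$. My plan is to quotient by the chosen lifts $\underline{b}$ from Lemma \ref{lem4}, reducing the computation to the top homology of a $(d-t)$-term complex over $M/\underline{b}M$. The top homology of such a complex admits a classical joint-kernel description, and translating back through the grading — keeping meticulous track of the shifts $\overline{c}$, $\overline{c_i} = \overline{c} - c_i$, and the offset $\beta$ — should produce the stated formula
\[
\bigcap_{i=1}^d (\underline{b},\q^{n+\beta-\overline{c_i}})M :_M a_i \big/ (\underline{b},\q^{n+\beta-\overline{c}})M.
\]
Ensuring that the chosen lifts $b_i$ interact correctly with the grading, so that no spurious terms appear in the intersection or denominator, will be the delicate part of the argument.
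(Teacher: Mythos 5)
Your plan follows the paper's own architecture quite closely --- induction on $d$ via the mapping-cone decomposition for Part (1), induction on $t$ via reduction modulo $b_1$ for Part (2), and the joint-kernel description of top homology for the formula --- but two steps have genuine gaps, and both concern ideas you have not named. First, you misread what Lemma \ref{lem4} is for: its point is not grading bookkeeping but the containment $\underline{b}A \subseteq \au A$, which yields $\underline{b}\,L_i(\au,\q,M;n)=0$ for all $i,n$ (\cite[Theorem 3.5(b)]{KSCH}), i.e.\ multiplication by $b_1$ is the \emph{zero} map on all homology of $\mathcal{L}_{\bullet}(\au,\q,M;\cdot)$. This annihilation is indispensable twice in your induction on $t$. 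The long exact sequence of Lemma \ref{lem3} by itself gives only $b_1$-surjectivity of $L_i(\au,\q,M;n-\beta_1)\to L_i(\au,\q,M;n)$ for $i\geq d-t+2$ (whence vanishing by graded Nakayama over the Rees ring), but at the edge index $i=d-t+1$ the sequence ends in $L_{d-t+1}(\au,\q,M/b_1M;n)$, which is generally nonzero, so neither the vanishing there nor the isomorphism $L_{d-t+1}(\au,\q,M/b_1M;n)\cong L_{d-t}(\au,\q,M;n-\beta_1)$ follows; one needs $b_1$ to kill homology so that the long exact sequence degenerates into short exact sequences $0\to L_i(M;n)\to L_i(M/b_1M;n)\to L_{i-1}(M;n-\beta_1)\to 0$, exactly as in the paper's proof of Theorem \ref{th-formula}. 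Without this, your ``translate back through the grading'' step fails. (The same inductive step also silently uses Valabrega--Valla: $G_{M/b_1M}(\q)\cong G_M(\q)/b_1^\star G_M(\q)$, so that $b_2^\star,\ldots,b_t^\star$ stays regular after reduction; likewise your (a)~$\Rightarrow$~(c) in Part (1) needs the Valabrega--Valla colon identity $(\au',\q^n)M:_M a_d=(\au',\q^{n-c_d})M$, which no diagram chase supplies, and your (b)~$\Rightarrow$~(a) needs graded Nakayama, as in the paper.)

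Second, your converse is circular: ``reversing the reduction'' presupposes possession of the first regular element $b_1^\star$, which is precisely what must be produced, and Part (1) only covers the case $t=d$. The paper's Proposition \ref{prop-formula} instead passes to honest Koszul homology of the form module: the exact sequence $0\to \q^nM/\q^{n+1}M\to M/\q^{n+1}M\to M/\q^nM\to 0$ induces $0\to K_{\bullet}(\au^\star;G_M(\q))_n\to \mathcal{L}_{\bullet}(\au,\q,M;n+1)\to \mathcal{L}_{\bullet}(\au,\q,M;n)\to 0$, so the hypothesis forces $H_i(\au^\star;G_M(\q))=0$ for $i>d-t$, and the classical depth sensitivity of Koszul homology then extracts a $G_M(\q)$-regular sequence of length $t$ from $\au^\star G_A(\q)$. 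That bridge from the $L_i$ to $H_i(\au^\star;G_M(\q))$ is the missing idea. Three minor points: the statement as quoted contains a typo --- the vanishing range is $i>d-t$ (as in Theorem \ref{th-formula}), which your ``drop one degree, iterate $t$ times'' count correctly produces; your short exact sequence in Part (1) is written the wrong way round (the subcomplex is $\mathcal{L}_{\bullet}(\au',\q,M;n)$, the shifted copy is the quotient; the ``contains $e_d$'' summands do not form a subcomplex), though the resulting long exact sequence with connecting map $\pm a_d$ is the one both you and the paper use; and the ``top homology'' you need is $L_d$ of the full $d$-element complex over $M/\underline{b}M$, not of a $(d-t)$-term complex.
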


We refer Section 3 and 4 for the detail discussion about above theorem. In \cite{BSCH} and \cite{KSCH}, authors posed the problem to study the Euler characteristic $\chi_A(\au,\q,M)$ of the complex $K_{\bullet}(\au,\q,M;n)$, see def. \ref{kos-1}, independently of its value which is equal to $e_0(\au; M) - c_1\cdot \ldots \cdot c_d \cdot e_0(\q; M) \geq 0$ for $n \gg 0$, cf. \cite{BSCH}. In section 6, we discuss a few properties of this Euler characteristic. A further investigation of the geometric meanings of the length involved in Theorem \ref{th-1} in affine space $\mathbb{A}_{\Bbbk}^d$ when $d \geq 3$ is in progress.

As a source for basic notions in Commutative Algebra, we refer to \cite{AM} and \cite{hM}. For results 
on Homological Algebra, we refer to \cite{jR} and \cite{cW}.

\section{Preliminaries}

In this section, we present the basic notations, which we are going to use in upcoming sections. For more detail, we refer the text book \cite{hM} and lecture notes \cite{jpS}.

\begin{notation}\label{not1}
	(1) Let $(A, \mathfrak{m})$ be a local Noetherian ring, $\mathfrak{q}$ be an ideal in $A$ and $M$ be a finitely generated $A$-module. Then $\mathfrak{q}$ is said to be an ideal of definition with respect to $M$ if the length $\ell_A(M / \q M)$ of $A$-module $M / \mathfrak{q}M$ is finite. Now, it is easily seen that the length of $A$-modules $M / \q^nM$ is also finite for all $n \in \mathbb{N}$.
	
	For $n$ large enough, $\ell_A(M/\q^nM)$ becomes a polynomial, which is written as
	\[
	\ell_A(M/\q^nM) = \sum_{i=0}^d e_i(\q;M)\binom{n+d-i-1}{d-i},
	\]
	where degree $d$ is equal to $\dim M$ (see \cite{hM}).
	Here, $e_i(\q;M)$ are called the Hilbert-Samuel local multiplicities of $M$ with respect to $\q$. The first $e_0(\q;M)$ of them is our main ingredient for the rest of the note, and we call it just the multiplicity of $M$ with respect to $\q$.
	
	(2) The Rees and form rings of $A$ with respect 
	to $\mathfrak{q}$ are defined by 
	\[
	R_A(\mathfrak{q}) = \oplus_{n \geq 0} \mathfrak{q}^n \,T^n \subseteq A[T] \,
	\text{ and }\, G_A(\mathfrak{q}) = \oplus_{n \geq 0} \mathfrak{q}^n/\mathfrak{q}^{n+1},
	\]
	where $T$ denotes an indeterminate over $A$. 
	The Rees and form modules are defined in the corresponding way by 
	\[
	R_M(\mathfrak{q}) = \oplus_{n \geq 0} \mathfrak{q}^n M \,T^n \subseteq M[T] \,
	\text{ and } \, G_M(\mathfrak{q}) = \oplus_{n \geq 0} \mathfrak{q}^nM/\mathfrak{q}^{n+1}M.
	\]
	
	(3) Assume that $m \in M$ such that $m \in \mathfrak{q}^c M \setminus \mathfrak{q}^{c+1} M$. We define  
	$m^{\star} := m + \mathfrak{q}^{c+1} M \in [G_M(\mathfrak{q})]_c$. If $m \in \cap_{n \geq 1} \mathfrak{q}^n M$, 
	then we write $m^{\star} = 0$. Here $m^{\star}$ and $c$ are called the initial form and initial degree of $m$ in $G_M(\q)$ respectively. We refer \cite{SH} for more detail.
\end{notation}

For basic results about multiplicities, we refer \cite{BH} and \cite{hM}. Another tool for the investigation is use of Koszul complex.

\begin{remark} \label{kos} ({\sl Koszul Complex})
	Let $\underline{a} = a_1,\ldots,a_d$ denote a system of elements of the ring $A$. The Koszul complex 
	$K_{\bullet}(\underline{a};A)$ is defined as follows: Assume that $F$ is a free $A$-module with basis $e_1,\ldots,e_d$. 
	Then $K_i(\underline{a};A) = \bigwedge^i F$ for $i = 1,\ldots,d$. A basis of $K_i(\underline{a};A)$ is given by the wedge 
	products $e_{j_1} \wedge \ldots \wedge e_{j_i}$ for $1 \leq j_1 < \ldots < j_i \leq d$. The boundary 
	homomorphism $K_i(\underline{a};A) \to K_{i-1}(\au;A)$ is defined by 
	\[
	d_{j_1 \ldots j_i} :
	e_{j_1} \wedge \ldots \wedge e_{j_i} \mapsto \sum_{k=1}^{i} (-1)^{k+1} a_{j_k} e_{j_1}\wedge \ldots \wedge \widehat{e_{j_k}} 
	\wedge \ldots \wedge e_{j_i}
	\] 
	on the free generators $e_{j_1} \wedge \ldots \wedge e_{j_i}$. Also $K_{\bullet}(\au;M) \cong K_{\bullet}(\au;A) \otimes_A M$. We write $H_i(\au;M), i \in \mathbb{Z},$ for the $i$-th homology of $K_{\bullet}(\au;M)$.
	
\end{remark}

For more detail about Koszul homology, we refer \cite{BH} and \cite{hM}. The following are main ingredients for the investigation, see \cite{KSCH} for reference. 

\begin{notation}\label{kos-1}(Khadam-Schenzel \cite{KSCH})
	(1) Assume that $a_i \in \q^{c_i}$ for $i=1,\ldots,d$ and $n$ is a non-negative integer. For $n < 0$, we assume that $\mathfrak{q}^n M = 0$. We define a complex $K_{\bullet}(\au,\mathfrak{q},M;n)$ in the following way:
	\begin{itemize}
		\item[(i)] The $i$-th term $K_i(\au,\mathfrak{q},M;n) := \oplus_{1\leq j_1 < \ldots < j_i \leq d} 
		\mathfrak{q}^{n-c_{j_1}- \ldots- c_{j_i}}M$ for $0 \le i \le d$ and $K_i(\au,\mathfrak{q},M;n) = 0$ otherwise.
		\item[(ii)] The boundary homomorphism $K_i(\au,\mathfrak{q},M;n) \to K_{i-1}(\au,\mathfrak{q},M;n)$ is defined by homomorphisms on each of 
		the direct summands $\mathfrak{q}^{n-c_{j_1}- \ldots- c_{j_i}}M$. On $\mathfrak{q}^{n-c_{j_1}- \ldots- c_{j_i}}M$, 
		it is the map given by $d_{j_1 \ldots j_i} \otimes 1_M$ restricted to $\mathfrak{q}^{n-c_{j_1}- \ldots- c_{j_i}}M$, where $d_{j_1 \ldots j_i}$ denotes the homomorphism as defined in \ref{kos}.
	\end{itemize}
	It is clear that $K_{\bullet}(\au,\q,M;n)$ is a complex. 
	Moreover, by construction, $K_{\bullet}(\au,\mathfrak{q},M;n)$ is a sub complex of the Koszul complex $K_{\bullet}(\au;M)$. We write $H_i(\au,\mathfrak{q},M;n)$,$i \in \mathbb{Z}$, for the $i$-th 
	homology of the complex $K_{\bullet}(\au,\mathfrak{q},M;n)$.
	Note that $[K_{\bullet}(\underline{aT^c};R_M(\mathfrak{q}))]_n 
	\cong K_{\bullet}(\au,\mathfrak{q},M;n)$ for $n \in \mathbb{N}$.
	
	(2) We define $\mathcal{L}_{\bullet}(\au,\mathfrak{q},M;n)$ as the quotient of the 
	embedding $K_{\bullet}(\au,\mathfrak{q},M;n) \to K_{\bullet}(\au;M)$. That is, there is a short exact sequence of complexes
	\[
	0 \to K_{\bullet}(\au,\mathfrak{q},M;n) \to K_{\bullet}(\au;M) \to \mathcal{L}_{\bullet}(\au,\mathfrak{q},M;n) 
	\to 0.
	\]
	Note that $\mathcal{L}_i(\au,\mathfrak{q},M;n) \cong \oplus_{1 \leq j_1 < \ldots < j_i \leq d}M/\mathfrak{q}^{n-c_{j_1}- \ldots- c_{j_i}}M$.
	The boundary homomorphisms are those induced by the Koszul complex. We write $L_i(\au,\mathfrak{q},M;n)$, $i \in \mathbb{Z}$, 
	for the $i$-th homology of the complex $\mathcal{L}_{\bullet}(\au,\mathfrak{q},M;n)$.
\end{notation}

For more detail about complexes $K_{\bullet}(\au,\q,M;n)$ and $\mathcal{L}_{\bullet}(\au,\q,M;n)$, and their relationship with the local cohomology module, we refer Khadam-Schenzel \cite{KSCH}.

\section{Regular Sequences in the Form and Rees Modules}

There is a criterion in terms of Koszul homology which ensures whether a sequence of elements in $A$ is $M$-regular or not. More precisely, the sequence $\au=a_1,\ldots,a_d$ is $M$-regular if and only if $H_i(\au;M)=0$ for all $i > 0$ 
 (see \cite{hM}). In this section, we present a similar criterion in the form module $G_M(\q)$ in terms of the homology modules $L_i(\au,\q,M;n)$.

Let $\au^\star=a_1^\star,\ldots,a_d^\star$ denote a sequence of initial forms in the form ring $G_A(\q)$ with $\deg a_i^\star = c_i$ for $i=1,\ldots,d$. We start with the main result of the section.

\begin{theorem}\label{th-reg}
	With the previous notations, the following are equivalent:
\begin{itemize}
	\item[(1)] $\au^\star=a_1^\star,\ldots,a_d^\star$ is $G_M(\q)$-regular sequence.
	\item[(2)] $L_1(\au,\q,M;n)=0$ for all $n$.
	\item[(3)] $L_i(\au,\q,M;n)=0$ for all $i>0$, for all $n$. 	
\end{itemize}	
\end{theorem}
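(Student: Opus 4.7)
The plan is to link $\mathcal{L}_\bullet(\au,\q,M;n)$ at consecutive values of $n$ by means of a Koszul complex on the form module, and to read the equivalences off the associated long exact sequence. Concretely, the first step is to exhibit the short exact sequence of complexes
\[
0 \to [K_\bullet(\au^\star;G_M(\q))]_{n-1} \to \mathcal{L}_\bullet(\au,\q,M;n) \to \mathcal{L}_\bullet(\au,\q,M;n-1) \to 0,
\]
where the right-hand surjection is the natural map $M/\q^{n-c_J}M \twoheadrightarrow M/\q^{n-1-c_J}M$ on each $J$-summand, and the kernel at position $i$ is $\bigoplus_{|J|=i}\q^{n-1-c_J}M/\q^{n-c_J}M$. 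A routine check of the induced differential identifies this kernel with the $(n-1)$-th graded piece of the Koszul complex of $\au^\star$ on $G_M(\q)$.

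The resulting long exact sequence
\[
\cdots \to [H_i(\au^\star;G_M(\q))]_{n-1} \to L_i(\au,\q,M;n) \to L_i(\au,\q,M;n-1) \to [H_{i-1}(\au^\star;G_M(\q))]_{n-1} \to \cdots
\]
yields (1)$\Leftrightarrow$(3) directly. For (1)$\Rightarrow$(3), $G_M(\q)$-regularity of $\au^\star$ forces $H_i(\au^\star;G_M(\q))=0$ for $i>0$, whence $L_i(n)\cong L_i(n-1)$ for $i\geq 2$ and $L_1(n)\hookrightarrow L_1(n-1)$; since $\mathcal{L}_\bullet(\au,\q,M;n)=0$ for $n$ sufficiently small, induction on $n$ gives vanishing of $L_i(n)$ for all $i\geq 1$ and all $n$. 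For (3)$\Rightarrow$(1), the sequence collapses for $i\geq 2$ to $0 \to [H_{i-1}(\au^\star;G_M(\q))]_{n-1} \to 0$, forcing $H_j(\au^\star;G_M(\q))=0$ for every $j\geq 1$; the graded Koszul criterion, applied to the positively graded ring $G_A(\q)$ and the finitely generated graded module $G_M(\q)$, then produces the $G_M(\q)$-regularity of $\au^\star$. The implication (3)$\Rightarrow$(2) is trivial.

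The main obstacle is (2)$\Rightarrow$(1), since the primary long exact sequence only realises $[H_1(\au^\star;G_M(\q))]_{n-1}$ as a quotient of $L_2(\au,\q,M;n-1)$. I plan to handle it by induction on $d$. For $d=1$ one computes $L_1(\au,\q,M;n)=(\q^nM:_M a_1)/\q^{n-c_1}M$, and the vanishing for all $n$ is equivalent to $a_1^\star$ being a non-zero-divisor on $G_M(\q)$ by a direct Krull's intersection argument on the largest $\q$-power containing a given element. For $d\geq 2$, the identification
\[
\mathcal{L}_\bullet(\au,\q,M;n) \cong \operatorname{Cone}\bigl(\mathcal{L}_\bullet(\au',\q,M;n-c_d) \xrightarrow{\cdot a_d} \mathcal{L}_\bullet(\au',\q,M;n)\bigr),
\]
with $\au'=a_1,\ldots,a_{d-1}$, yields an inductive long exact sequence connecting the $L_i$ of $\au$ with those of $\au'$; combining it with graded Nakayama applied to the finite-length modules $L_i(\au',\q,M;n)$ should propagate the vanishing in the manner of the classical proof of Koszul rigidity.
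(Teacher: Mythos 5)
Your proposal is sound, but only one of its three legs follows the paper's own proof of Theorem \ref{th-reg}. For (2)$\Rightarrow$(1) you do exactly what the paper does: induction on $d$ via the mapping cone description of $\mathcal{L}_{\bullet}(\au,\q,M;n)$, the resulting surjectivity $L_1(\au',\q,M;n)=a_dL_1(\au',\q,M;n-c_d)$, Nakayama, and then the identification $L_1(\au,\q,M;n)\cong (\au',\q^n)M:_Ma_d/(\au',\q^{n-c_d})M$. By contrast, for (1)$\Rightarrow$(3) the paper again inducts on $d$ through the mapping cone and quotes Valabrega--Valla \cite{VV} to kill the colon quotient, whereas you obtain (1)$\Leftrightarrow$(3) in one stroke from the sequence
\[
0 \to [K_{\bullet}(\au^\star;G_M(\q))]_{n-1} \to \mathcal{L}_{\bullet}(\au,\q,M;n) \to \mathcal{L}_{\bullet}(\au,\q,M;n-1) \to 0,
\]
which is precisely the sequence the paper itself introduces later, in the proof of Proposition \ref{prop-formula}. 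Your route identifies condition (3) with the Koszul acyclicity $H_i(\au^\star;G_M(\q))=0$ for $i>0$ and thereby confines the appeal to \cite{VV} to the implication (2)$\Rightarrow$(1); the paper's inductive route instead produces along the way the explicit colon formula for $L_1(\au,\q,M;n)$, which it reuses in Sections 4 and 5. Three points to tighten. First, your induction base rests on $\mathcal{L}_{\bullet}(\au,\q,M;n)=0$ for $n\le 0$, which requires the convention $\q^mM=M$ for $m\le 0$ as in \cite{KSCH}; the clause in Notation \ref{kos-1} must be read this way, since with $\q^{n-c_1}M=0$ one would get $L_1(a_1,\q,M;0)=M$ and condition (2) could never hold. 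Second, ``graded Nakayama applied to the finite-length modules $L_1(\au',\q,M;n)$'' is not literally Nakayama on a single module, because $a_d$ maps the module at level $n-c_d$ onto the different module at level $n$; argue instead with a minimal $n$ such that $L_1(\au',\q,M;n)\neq 0$, using $c_d>0$ and the vanishing for $n\le 0$ (equivalently, graded Nakayama for $\oplus_nL_1(\au',\q,M;n)$ over the Rees ring) --- the same gloss is needed for the paper's own one-line invocation of Nakayama. Third, in the final step of (2)$\Rightarrow$(1), the vanishing of $(\au',\q^n)M:_Ma_d/(\au',\q^{n-c_d})M$ for all $n$ yields regularity of $a_d^\star$ on $G_M(\q)/\au'^\star G_M(\q)$ only through the isomorphism $G_{M/\au'M}(\q)\cong G_M(\q)/\au'^\star G_M(\q)$, which is available because $\au'^\star$ is by then known to be a regular sequence (Valabrega--Valla); this is exactly what your phrase ``in the manner of the classical proof of Koszul rigidity'' elides and should be stated explicitly, since it is the one place where the filtered situation genuinely differs from the classical Koszul argument.
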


\begin{proof}
	For (1) $\Rightarrow$ (3), we use induction on $d$. If $d=1$, then 
	\[
	L_1(a_1,\q,M;n) = \q^nM :_A a_1 / \q^{n-c_1}M,
	\]
	which is equal to zero for all $n$ if and only if $a_1^\star$ is $G_M(\q)$-regular. Now, by virtue of long exact homology sequence coming from the mapping cone construction of the complex $\mathcal{L}_{\bullet}(\au,\q,M;n)$ (see \cite{KSCH}) and by inductive step, $L_i(\au,\q,M;n) = 0$ for all $i>1$, for all $n$, and 
	\[
	L_1(\au,\q,M;n) \cong 0 :_{L_0(\au',\q,M;n-c_d)} a_d.
	\]
	The latter is isomorphic to $(\au',\q^n)M :_A a_d / (\au',\q^{n-c_d})M$, which is equal to zero for all $n$, see \cite{VV}. It is obvious that (3) $\Rightarrow$ (2).
	
	For (2) $\Rightarrow$ (1), we apply induction on $d$ once again. The case $d=1$ is clear, see above. Again, from the mapping cone construction and by assumption, 
	\[
	L_1(\au',\q,M;n) = a_dL_1(\au',\q,M;n-c_d),
	\]
	and
	therefore, by virtue of Nakayama lemma $L_1(\au',\q,M;n) = 0 $ for all $n$. Hence $\au'^\star=a_1^\star,\ldots,a_{d-1}^\star$ is a $G_M(\q)$-regular sequence by induction. Moreover, 
	\[
	0=L_1(\au,\q,M;n) \cong (\au',\q^n)M :_A a_d / (\au',\q^{n-c_d})M
	\]
	for all $n$, and hence $a_d^\star$ is $G_M(\q)/(\au'^\star)G_M(\q)$-regular. This finishes the argument.
\end{proof}

The following is a consequence of the previous theorem.

\begin{corollary}\label{cor2}
	With the previous notations, $\au^\star=a_1^\star,\ldots,a_d^\star$ is a $G_M(\q)$-regular sequence implies that $\underline{aT^c}=a_1T^{c_1},\ldots,a_dT^{c_d}$ is an $R_M(\q)$-regular sequence.
\end{corollary}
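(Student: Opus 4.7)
The plan is to exploit the isomorphism of complexes $[K_\bullet(\underline{aT^c}; R_M(\q))]_n \cong K_\bullet(\au, \q, M; n)$ recorded in Notation~\ref{kos-1}(1); under this identification, the graded pieces of the Koszul homology $H_i(\underline{aT^c}; R_M(\q))$ are precisely the $H_i(\au, \q, M; n)$ for $n \geq 0$. Thus it suffices to verify $H_i(\au, \q, M; n) = 0$ for all $i > 0$ and all $n \geq 0$, and then appeal to the standard graded Koszul criterion for regular sequences, which is applicable since $R_A(\q)$ is a non-negatively graded Noetherian ring, $R_M(\q)$ is a finitely generated graded module, and each $a_iT^{c_i}$ is homogeneous of positive degree lying in the irrelevant ideal.

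To obtain that vanishing I would combine two inputs. First, Theorem~\ref{th-reg} gives directly, from the hypothesis that $\au^\star$ is $G_M(\q)$-regular, that $L_i(\au, \q, M; n) = 0$ for all $i > 0$ and all $n$. Second, by the classical Valabrega--Valla observation, $G_M(\q)$-regularity of $\au^\star$ forces $\au$ itself to be an $M$-regular sequence, and so $H_i(\au; M) = 0$ for all $i > 0$.

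I would then feed these two vanishings into the long exact homology sequence coming from the short exact sequence of complexes
\[
0 \to K_\bullet(\au, \q, M; n) \to K_\bullet(\au; M) \to \mathcal{L}_\bullet(\au, \q, M; n) \to 0
\]
of Notation~\ref{kos-1}(2). For each fixed $i > 0$ the relevant fragment reads
\[
L_{i+1}(\au, \q, M; n) \to H_i(\au, \q, M; n) \to H_i(\au; M),
\]
whose outer terms both vanish by the previous step, forcing $H_i(\au, \q, M; n) = 0$ for every $i > 0$ and every $n \geq 0$. Via the isomorphism of complexes, this is exactly the desired graded vanishing on the Rees module.

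The one place that calls for a brief side remark rather than pure invocation of Theorem~\ref{th-reg} is the external input that $G_M(\q)$-regularity of $\au^\star$ implies $M$-regularity of $\au$; this is standard and may be justified by a Krull-intersection argument at each step of the sequence, combined with the Valabrega--Valla compatibility of the associated graded construction with passing to quotients by a regular sequence. Everything else in the chain of deductions is formal once Theorem~\ref{th-reg} and the short exact sequence of Notation~\ref{kos-1}(2) are in place, so no substantive obstacle is expected.
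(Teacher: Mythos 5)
Your proposal is correct and takes essentially the same route as the paper's own proof: Valabrega--Valla regularity of $\au$ on $M$ gives $H_i(\au;M)=0$ for $i>0$, which together with the vanishing $L_i(\au,\q,M;n)=0$ from Theorem \ref{th-reg} and the long exact homology sequence of Notation \ref{kos-1}(2) forces $H_i(\au,\q,M;n)=0$ for $i>0$, i.e.\ $H_i(\underline{aT^c};R_M(\q))=0$, and the Koszul criterion concludes. Your only additions --- making explicit the long exact sequence fragment and the graded hypotheses (elements homogeneous of positive degree in the irrelevant ideal) under which the Koszul criterion applies to $R_M(\q)$ --- are details the paper leaves implicit, not a different argument.
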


\begin{proof}
	If $\au^\star=a_1^\star,\ldots,a_d^\star$ is a $G_M(\q)$-regular sequence, then $\au=a_1,\ldots,a_d$ is an $M$-regular sequence, see \cite{VV}. Hence $H_i(\au;M) = 0$ for all $i>0$. Therefore, from the long exact sequence of homology coming from the short exact sequence of \ref{kos-1}, $H_i(\au,\q,M;n)=0$ for all $i>0$ and for all $n$ (see \ref{th-reg}). That is $H_i(\underline{aT^c}; R_M(\q))=0$ for all $i>0$. Hence, by virtue of Koszul criterion, $\underline{aT^c}=a_1T^{c_1},\ldots,a_dT^{c_d}$ is an $R_M(\q)$-regular sequence.
\end{proof}

\section{A Formula for Homology}

There is a classical result concerning the length of an $M$-sequence inside the ideal $(a_1,\ldots,a_d)$ and vanishing of the Koszul homology. More precisely, if $b_1,\ldots,b_t$ is an $M$-sequence contained in the ideal $(a_1,\ldots,a_d)$, then $H_i(\au;M)=0$ for all $i>d-t$, and there is a formula 
\[
H_{d-t}(\au;M) \cong (b_1,\ldots,b_t)M :_A \au / (b_1,\ldots,b_t)M,
\]
see \cite{BH}. In this section, we present the similar result for the homology modules $L_i(\au,\q,M;n)$.

We begin with a lemma.

\begin{lemma}\label{lem3}
	With the previous notations, let $b^\star$ be a $G_M(\q)$-regular element of degree $\beta$, then there is a short exact sequence of complexes
	\[
	0 \to \mathcal{L}_{\bullet}(\au,\q,M;n-\beta) \stackrel{b} \to \mathcal{L}_{\bullet}(\au,\q,M;n) \to \mathcal{L}_{\bullet}(\au,\q,M/bM;n) \to 0.
	\]
	In particular, there is the long exact homology sequence
	\[
	\ldots \to L_i(\au,\q,M;n-\beta) \stackrel{b} \to L_i(\au,\q,M;n) \to L_i(\au,\q,M/bM;n) \to \ldots.
	\]
\end{lemma}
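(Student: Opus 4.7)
The plan is to construct the short exact sequence degreewise, using the explicit description $\mathcal{L}_i(\au,\q,M;n) \cong \bigoplus_{1 \leq j_1 < \ldots < j_i \leq d} M/\q^{n-c_{j_1}-\ldots-c_{j_i}}M$ from Notation \ref{kos-1}. Writing $m = n - c_{j_1}-\ldots-c_{j_i}$ for the exponent on the summand indexed by $J = (j_1,\ldots,j_i)$, I would propose the rowwise sequence
\[
0 \to M/\q^{m-\beta}M \stackrel{b}{\to} M/\q^m M \to M/(bM + \q^m M) \to 0.
\]
Since $\deg b^\star = \beta$ means $b \in \q^\beta$, multiplication by $b$ carries $\q^{m-\beta}M$ into $\q^m M$, so the left map is well defined on quotients. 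The right-hand term coincides with the summand of $\mathcal{L}_i(\au,\q,M/bM;n)$ after identifying $\q^m(M/bM)$ with $(bM + \q^m M)/bM$, and the right map is the natural surjection.

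Middle exactness and surjectivity on the right are formal. The key step is injectivity of multiplication by $b$: if $bx \in \q^m M$, one must deduce $x \in \q^{m-\beta}M$. Assuming the contrary, let $j$ be maximal with $x \in \q^j M$, so $j < m-\beta$ and $x^\star \ne 0$ in $G_M(\q)_j$. Regularity of $b^\star$ yields $b^\star x^\star \ne 0$ in $G_M(\q)_{j+\beta}$, i.e.\ $bx \in \q^{j+\beta}M \setminus \q^{j+\beta+1}M$; but $j+\beta < m$ forces $bx \in \q^m M \subseteq \q^{j+\beta+1}M$, a contradiction. The degenerate case $x \in \bigcap_k \q^k M$ is trivial because $x \in \q^{m-\beta}M$ automatically.

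Next I would assemble these rowwise exact sequences into a short exact sequence of complexes by checking that both maps commute with the boundaries of $\mathcal{L}_\bullet$. This is automatic: the boundaries are induced on each summand by multiplication by $\pm a_{j_k}$, which commutes with multiplication by $b$ (as $A$ is commutative) and with the natural surjection $M \twoheadrightarrow M/bM$. Hence the square diagrams involved in the differential commute, and one obtains a genuine short exact sequence of chain complexes.

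The long exact homology sequence then follows at once from the standard snake-lemma machinery applied to a short exact sequence of complexes. The only nonformal step is the injectivity argument in the middle paragraph, which is exactly where the hypothesis that $b^\star$ is $G_M(\q)$-regular is used; the rest of the proof amounts to unwinding the definitions of Notation \ref{kos-1}.
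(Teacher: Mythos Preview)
Your proof is correct and follows essentially the same approach as the paper's: verify injectivity of multiplication by $b$ on each summand using the $G_M(\q)$-regularity of $b^\star$, identify the cokernel with $\mathcal{L}_\bullet(\au,\q,M/bM;n)$, and then pass to the long exact homology sequence. The paper's proof is extremely terse (it merely asserts that the kernel is zero because $b^\star$ is regular and that the cokernel is ``easy to see''), so your write-up simply fills in the details the paper omits, including the explicit injectivity argument and the compatibility with the Koszul-type boundaries.
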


\begin{proof}
	The kernel of the map $\mathcal{L}_{\bullet}(\au,\q,M;n-\beta) \stackrel{b} \to \mathcal{L}_{\bullet}(\au,\q,M;n)$ is zero since $b^\star$ is $G_M(\q)$-regular. Also, it is easy to see that $\Coker (m_b) = \mathcal{L}_{\bullet}(\au,\q,M/bM;n)$. This provides the short exact sequence of complexes. By taking homology 
	it yields the long exact sequence.
\end{proof}	

Let $\underline{b}=b_1,\ldots,b_t$ denote a sequence of elements in $A$, and $\underline{b}^\star=b_1^\star,\ldots,b_t^\star$ denote a sequence of initial forms in $G_A(\q)$ with $\deg b_i^\star = \beta_i$. There is another technical lemma.

\begin{lemma}\label{lem4}
    With the previous notations, assume that
	$\underline{b}^\star G_A(\mathfrak{q}) \subseteq \au^\star G_A(\mathfrak{q})$.
    Then there are elements $b_1', \cdots, b_t' \in A$ such that
\begin{itemize}
	\item[(i)] $b_i^\star = b_i'^\star$ for $i = 1, \cdots, t$.
	\item[(ii)] $(b_1', \cdots, b_t')A \subseteq (a_1, \cdots, a_d)A$.
\end{itemize}	
\end{lemma}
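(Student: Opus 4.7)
The plan is to exploit the graded structure of $G_A(\mathfrak{q})$: express each $b_i^\star$, viewed as a homogeneous element of the graded ideal $\underline{a}^\star G_A(\mathfrak{q})$, as a degree-respecting combination of the $a_j^\star$'s, then lift the coefficients back to $A$ in the correct powers of $\mathfrak{q}$, and finally define $b_i'$ as the corresponding $A$-linear combination of the $a_j$'s.

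First I would observe that $b_i^\star \in \underline{a}^\star G_A(\mathfrak{q})$, and since this ideal is homogeneous in the graded ring $G_A(\mathfrak{q})$ and $b_i^\star$ has degree $\beta_i$, one has an expression
\[
b_i^\star = \sum_{j=1}^d \gamma_{ij}^\star \, a_j^\star
\]
with $\gamma_{ij}^\star \in [G_A(\mathfrak{q})]_{\beta_i - c_j}$, with the convention $\gamma_{ij}^\star = 0$ whenever $\beta_i < c_j$. Next I would lift: for each nonzero $\gamma_{ij}^\star$ pick $\gamma_{ij} \in \mathfrak{q}^{\beta_i - c_j}$ whose residue class in $\mathfrak{q}^{\beta_i - c_j}/\mathfrak{q}^{\beta_i - c_j + 1}$ equals $\gamma_{ij}^\star$, and set $\gamma_{ij} = 0$ otherwise. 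Defining
\[
b_i' := \sum_{j=1}^d \gamma_{ij}\, a_j
\]
gives (ii) for free. For (i), each summand lies in $\mathfrak{q}^{\beta_i}$, so $b_i' \in \mathfrak{q}^{\beta_i}$; and by construction the class of $b_i - b_i'$ in $[G_A(\mathfrak{q})]_{\beta_i}$ vanishes, so $b_i - b_i' \in \mathfrak{q}^{\beta_i + 1}$, whence $b_i^\star = b_i'^\star$.

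The only delicate point is degree bookkeeping: each lift $\gamma_{ij}$ must be chosen in \emph{exactly} $\mathfrak{q}^{\beta_i - c_j}$ (not in any lower power), so that $\gamma_{ij} a_j \in \mathfrak{q}^{\beta_i}$ with initial form $\gamma_{ij}^\star a_j^\star$ in the prescribed degree. With that care taken, the verification is entirely routine; no induction on $t$ or Artin--Rees--type argument is needed, since the single graded component of degree $\beta_i$ in $G_A(\mathfrak{q})$ already carries all the information required to pass from the hypothesis back to $A$.
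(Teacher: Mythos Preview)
Your proof is correct and follows essentially the same route as the paper's: the paper reads the containment $\underline{b}^\star G_A(\mathfrak{q}) \subseteq \underline{a}^\star G_A(\mathfrak{q})$ in degree $\beta_k$ to get $b_k \in \sum_j a_j\mathfrak{q}^{\beta_k-c_j} + \mathfrak{q}^{\beta_k+1}$ and sets $b_k' = \sum_j a_j r_{jk}$ with $r_{jk}\in\mathfrak{q}^{\beta_k-c_j}$, which is exactly your homogeneous expression $b_i^\star = \sum_j \gamma_{ij}^\star a_j^\star$ lifted back to $A$. The only difference is presentational: you phrase it as ``write in $G_A(\mathfrak{q})$, then lift'', while the paper phrases it as ``restrict the ideal inclusion to the relevant graded piece''.
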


\begin{proof}
	The containment relation of the assumption restricted to degree $n \in \mathbb{Z}$ provides
	$$
	(\sum_{i = 1}^tb_i\mathfrak{q}^{n - \beta_i} + \mathfrak{q}^{n + 1}) / \mathfrak{q}^{n + 1} \subseteq (\sum_{j = 1}^da_j\mathfrak{q}^{n - c_j} + \mathfrak{q}^{n + 1}) / \mathfrak{q}^{n + 1}
	$$ 
	for all $n$, and hence
	$\sum_{i = 1}^tb_i\mathfrak{q}^{n - \beta_i} \subseteq \sum_{j = 1}^da_j\mathfrak{q}^{n - c_j} + \mathfrak{q}^{n + 1}$ for all $n$.
	Now choose $n = \beta_k$ and therefore 
    $b_k \in \sum_{j = 1}^da_j\mathfrak{q}^{\beta_k - c_j} + \mathfrak{q}^{\beta_k + 1}$.
	Whence there exist $r_{jk} \in \mathfrak{q}^{\beta_k - c_j}$ for $j = 1, \cdots, d,$
	such that $b_k - \sum_{j = 1}^da_jr_{jk} \in \mathfrak{q}^{\beta_k + 1}$.
	Note that $\sum_{j = 1}^da_jr_{jk} \in \mathfrak{q}^{\beta_k} \setminus \mathfrak{q}^{\beta_k + 1}$.
	We choose $b_k' = \sum_{j = 1}^da_jr_{jk}$ for $k = 1, \cdots, t$, and this finishes the proof.
\end{proof}

Now, we present the main result of the section.

\begin{theorem}\label{th-formula}
	With the previous notations, if $\au^\star G_A(\q)$ contains a $G_M(\q)$-regular sequence $\underline{b}^\star=b_1^\star,\ldots,b_t^\star$, then
	\[
	L_i(\au,\q,M;n)=0 \text{ for all } i>d-t, \text{ for all } n.
	\]
	Moreover, given $\underline{b}^\star=b_1^\star,\ldots,b_t^\star$ we choose $b_i$ for $i=1,\ldots,t$ as in Lemma \ref{lem4}, there is an isomorphism
	\[
	L_{d-t}(\au,\q,M;n) \cong \cap_{i=1}^d(\underline{b},\q^{n+\beta-\overline{c_i}})M :_M a_i / (\underline{b},\q^{n+\beta-\overline{c}})M,
	\]
	where $c:=c_1\cdot \ldots \cdot c_d, \overline{c}:=\sum_{i=1}^dc_i, \overline{c_i}:=c_1+\ldots+c_{i-1}+c_{i+1}+\ldots+c_d,$ and $\beta:=\sum_{j=1}^t\beta_j$.
\end{theorem}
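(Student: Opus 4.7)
The plan is to induct on $t$, the length of the regular sequence $\underline{b}^\star$. When $t = 0$ the vanishing $L_i(\au,\q,M;n) = 0$ for $i > d$ is immediate from the length of $\mathcal{L}_\bullet(\au,\q,M;n)$, while the formula for $L_d$ is a direct computation of the kernel of the top boundary $\mathcal{L}_d = M/\q^{n-\overline{c}}M \to \mathcal{L}_{d-1}$: this map sends $m + \q^{n-\overline{c}}M$ to the tuple $(\pm a_i m + \q^{n-\overline{c_i}} M)_{i=1}^d$, whose kernel is visibly $\cap_{i=1}^d (\q^{n-\overline{c_i}} M :_M a_i)/\q^{n-\overline{c}} M$, which agrees with the claim for empty $\underline{b}$.

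The heart of the inductive step is a null-homotopy argument. Since $b_1^\star \in \au^\star G_A(\q)$, Lemma \ref{lem4} allows us to replace $b_1$ by an element of $(a_1,\ldots,a_d)A$ with the same initial form; we may thus write $b_1 = \sum_{j=1}^d r_{j1}\, a_j$ with $r_{j1} \in \q^{\beta_1 - c_j}$. The standard Koszul contracting homotopy $s(e_{j_1} \wedge \cdots \wedge e_{j_i}) = \sum_j r_{j1}\, e_j \wedge e_{j_1} \wedge \cdots \wedge e_{j_i}$ witnesses $b_1 = ds + sd$ on $K_\bullet(\au;M)$, and the degree constraint on the $r_{j1}$ guarantees that $s$ carries $K_\bullet(\au,\q,M;n-\beta_1)$ into $K_\bullet(\au,\q,M;n)$ and so descends to a null-homotopy on the quotient complex $\mathcal{L}_\bullet$. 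Consequently multiplication by $b_1$ acts as zero on every $L_i(\au,\q,M;n)$, and the long exact sequence of Lemma \ref{lem3} collapses into short exact sequences
\begin{equation*}
0 \to L_i(\au,\q,M;n) \to L_i(\au,\q,M/b_1M;n) \to L_{i-1}(\au,\q,M;n-\beta_1) \to 0.
\end{equation*}

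Since $b_1^\star$ is $G_M(\q)$-regular, the Valabrega--Valla criterion gives $G_{M/b_1 M}(\q) \cong G_M(\q)/(b_1^\star)$, so $b_2^\star,\ldots,b_t^\star$ descends to a $G_{M/b_1M}(\q)$-regular sequence of length $t-1$ still contained in $\au^\star G_A(\q)$. The inductive hypothesis then gives $L_i(\au,\q,M/b_1M;n) = 0$ for $i > d - t + 1$; feeding this into the short exact sequence at index $i + 1$ yields $L_i(\au,\q,M;n) = 0$ for all $i > d - t$. Substituting $i = d - t + 1$ into the short exact sequence and invoking this vanishing produces the isomorphism $L_{d-t+1}(\au,\q,M/b_1 M;n) \cong L_{d-t}(\au,\q,M;n-\beta_1)$. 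After shifting $n \mapsto n + \beta_1$ (so that $\beta_1 + \beta' = \beta$, where $\beta' = \beta_2 + \cdots + \beta_t$) and translating the inductive formula from $M/b_1 M$ back to $M$ via the elementary identities $(\underline{b}', \q^m)(M/b_1M) :_{M/b_1M} a_i = ((\underline{b}, \q^m) M :_M a_i)/b_1 M$, the asserted description of $L_{d-t}(\au,\q,M;n)$ falls out.

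The main obstacle is the null-homotopy verification: one must confirm that the exponents $\beta_1 - c_j$ of the homotopy coefficients $r_{j1}$ align precisely with the exponents $n - \beta_1 - c_{j_1} - \cdots - c_{j_i}$ appearing in the summands of $K_i(\au,\q,M;n-\beta_1)$, so that $s$ genuinely respects the $\q$-filtration. Once this is secured, the remaining work is formal homological algebra plus notation-heavy but routine bookkeeping of colon ideals modulo $b_1 M$.
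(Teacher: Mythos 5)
Your proof is correct and takes essentially the same route as the paper's: induction on $t$, with Lemma \ref{lem4} used to arrange $\underline{b}A \subseteq \au A$, the Valabrega--Valla criterion to descend $b_2^\star,\ldots,b_t^\star$ to a $G_{M/b_1M}(\q)$-regular sequence, and the long exact sequence of Lemma \ref{lem3} splitting into short exact sequences because $b_1$ induces zero on the homology $L_i$. The only deviation is that where the paper cites \cite[Theorem 3.5(b)]{KSCH} for this annihilation, you prove it inline via the explicit Koszul homotopy $s = \xi \wedge (-)$ with $\xi = \sum_j r_{j1}e_j$, $r_{j1} \in \q^{\beta_1 - c_j}$, verifying that $s$ respects the $\q$-filtration and hence descends to $\mathcal{L}_{\bullet}$ --- a self-contained, chain-level substitute that is exactly the statement needed to kill the connecting maps $L_i(\au,\q,M;n-\beta_1) \to L_i(\au,\q,M;n)$.
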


\begin{proof}
     We proceed by induction on $t$. The vanishing $L_i(\au,\q,M;n) = 0$ for $i>d$ is trivial, and it is easily seen that $L_d(\au,\q,M;n) \cong \cap_{i=1}^d \q^{n-\overline{c_i}}M :_Ma_i / \q^{n-\overline{c}}M$.
     Now assume that $t>0$ and $\underline{b}A \subseteq \au A$ by lemma \ref{lem4}. As by virtue of Valla-Valabrega \cite{VV}, $b_2^\star,\ldots,b_t^\star$ is a $G_{M/b_1M}(\q)$-regular sequence, therefore by induction $L_i(\au,\q,M/b_1M;n)=0$ for all $i>d-t+1$ and for all $n$. Hence by lemma \ref{lem3}, $L_i(\au,\q,M;n)=0$ for all $i>d-t+1$ and for all $n$, and $L_{d-t+1}(\au,\q,M;n-\beta_1)=0$ for all $n$. Note that $\underline{b}L_i(\au,\q,M;n)=0$ for all $i$, for all $n$, see \cite[Theorem 3.5(b)]{KSCH}.
     
     Again by induction 
     \[
     L_{d-t+1}(\au,\q,M/b_1M;n) \cong \cap_{i=1}^d(\underline{b},\q^{n+\sum_{j=2}^t\beta_j-\overline{c_i}})M :_Ma_i/(\underline{b},\q^{n+\sum_{j=2}^t\beta_j-\overline{c}})M
     \]
     and hence by using \ref{lem3} and $b_1L_i(\au,\q,M;n) = 0$ for all $i,n$, we get 
     \[
     L_{d-t+1}(\au,\q,M/b_1M;n)\cong L_{d-t}(\au,\q,M;n-\beta_1).
     \]
     This finishes the inductive argument.
\end{proof}

There is a converse of the previous theorem.

\begin{proposition}\label{prop-formula}
	With the previous notations, assume that $L_i(\au,\q,M;n)=0$ for all $i>d-t$, for all $n$, then $\au^\star G_A(\q)$ contains a $G_M(\q)$-regular sequence $\underline{b}^\star=b_1^\star,\ldots,b_t^\star$.
\end{proposition}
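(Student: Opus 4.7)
The plan is to argue by induction on $t$. The case $t = 0$ is vacuous, and for $t \geq 1$ the inductive step splits into two tasks: first extract a single homogeneous $G_M(\q)$-regular element $b_1^\star$ lying in $\au^\star G_A(\q)$, and then reduce to the module $M/b_1 M$.

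For the extraction I would argue by contradiction. Suppose every homogeneous element of $\au^\star G_A(\q)$ is a zero-divisor on $G_M(\q)$. Because $G_A(\q)$ is a Noetherian graded ring and $G_M(\q)$ is a finitely generated graded module over it, $\Ass G_M(\q)$ is finite and consists of homogeneous primes; graded prime avoidance then forces $\au^\star G_A(\q) \subseteq \mathfrak{P}$ for some $\mathfrak{P} \in \Ass G_M(\q)$. Pick a homogeneous $y = m + \q^{n+1}M$ of degree $n$ with $\Ann(y) = \mathfrak{P}$, so that $a_i m \in \q^{n + c_i + 1} M$ for $i = 1, \ldots, d$ while $m \notin \q^{n+1}M$. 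Invoking the identification
\[
L_d(\au,\q,M;N) \cong \cap_{i=1}^d \q^{N - \overline{c_i}}M :_M a_i / \q^{N - \overline{c}}M
\]
recorded at the start of the proof of Theorem \ref{th-formula}, and taking $N = n + \overline{c} + 1$, these relations exhibit $m$ as a nonzero class in $L_d(\au,\q,M;N)$, contradicting the hypothesis.

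With $b_1^\star$ in hand, Lemma \ref{lem4} lets me realise it as the initial form of some $b_1 \in (a_1, \ldots, a_d)A$. Feeding $b_1$ into Lemma \ref{lem3} yields the long exact sequence
\[
\cdots \to L_i(\au,\q,M;n - \beta_1) \stackrel{b_1}{\longrightarrow} L_i(\au,\q,M;n) \to L_i(\au,\q,M/b_1 M;n) \to L_{i-1}(\au,\q,M;n - \beta_1) \to \cdots,
\]
in which the hypothesis kills both neighbouring terms as soon as $i > d - (t - 1)$. The induction hypothesis applied to $M/b_1 M$ with the integer $t - 1$ thus produces a $G_{M/b_1 M}(\q)$-regular sequence $b_2^\star, \ldots, b_t^\star$ in $\au^\star G_A(\q)$, and the Valla--Valabrega identification $G_{M/b_1 M}(\q) \cong G_M(\q)/b_1^\star G_M(\q)$ (valid because $b_1^\star$ is $G_M(\q)$-regular) upgrades this to the required $G_M(\q)$-regular sequence $b_1^\star, b_2^\star, \ldots, b_t^\star$.

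The main obstacle I expect is the extraction step: turning the homological vanishing $L_d = 0$ into the concrete existence of a nonzero-divisor in a graded ideal. Its success relies on two ingredients --- the finiteness and homogeneity of $\Ass G_M(\q)$, which enables graded prime avoidance, together with the explicit presentation of $L_d$ as a sum-quotient of colon ideals from Theorem \ref{th-formula} --- but once these are accepted, the rest is a routine use of the mapping cone long exact sequence of Lemma \ref{lem3}.
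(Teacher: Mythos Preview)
Your argument is correct but takes a genuinely different route from the paper's proof. The paper does not induct on $t$ at all: instead it observes that the short exact sequence
\[
0 \to \q^nM/\q^{n+1}M \to M/\q^{n+1}M \to M/\q^nM \to 0
\]
induces a short exact sequence of complexes
\[
0 \to K_{\bullet}(\au^\star;G_M(\q))_n \to \mathcal{L}_{\bullet}(\au,\q,M;n+1) \to \mathcal{L}_{\bullet}(\au,\q,M;n) \to 0,
\]
so the long exact homology sequence immediately transfers the vanishing $L_i(\au,\q,M;n)=0$ for $i>d-t$ to the vanishing $H_i(\au^\star;G_M(\q))=0$ for $i>d-t$, after which the classical Koszul depth-sensitivity theorem (as in \cite{hM} or \cite{BH}) delivers the regular sequence in one stroke. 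Your approach, by contrast, is more self-contained: you re-prove the depth-sensitivity statement directly in the $\mathcal{L}_\bullet$ setting via graded prime avoidance and the explicit description of $L_d$, then descend along Lemma~\ref{lem3}. The paper's method is shorter and makes the link to ordinary Koszul theory transparent; yours avoids invoking that black box and shows concretely how the nonvanishing of $L_d$ obstructs regularity, which is arguably more illuminating for a reader who has not internalised the classical criterion.
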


\begin{proof}
	Note that from the short exact sequence
	\[
	0 \to \q^nM/\q^{n+1}M \to M / \q^{n+1}M \to M / \q^nM \to 0,
	\]
	there is the following short exact sequence of complexes
	\[
	0 \to K_{\bullet}(\au^\star;G_M(\q))_n \to \mathcal{L}_{\bullet}(\au,\q,M;n+1) \to \mathcal{L}_{\bullet}(\au,\q,M;n) \to 0
	\]
	for $n\in \mathbb{N}$, where $K_{\bullet}(\au^\star;G_M(\q))_n$ denotes the $n$th component of the Koszul complex of $G_M(\q)$ w.r.t $\au^\star = a_1^\star,\ldots,a_d^\star$. From here, by view of long homology exact sequence $H_i(\au^\star;G_M(\q))_n=0$ for all $i>d-t$, for all $n$, and hence $H_i(\au^\star;G_M(\q))=0$ for all $i>d-t$. Now the result follows by virtue of Koszul homology, see \cite{hM}.
\end{proof}	

\section{Applications}

Let $(A,\mathfrak{m})$ denote a local Noetherian ring and $M$ be a finitely generated $A$-module with $\dim M = d$. Let $\au=a_1,\ldots,a_d$ denote a system of parameters of $M$ such that $\au \subset \q$.
%Also, let $\underline{b}=b_1,\ldots,b_{d-1}$ be a sequence of elements in $A$ such that $b_i^\star=b_i+\q^{\beta_i+1}$ for $i=1,\ldots,d-1$.
We present the main result of the section.

\begin{theorem}\label{th-mul}
	With the previous notations, if $\au^\star G_A(\q)$ contains a $G_M(\q)$-regular sequence of length $d-1$, then
	\begin{itemize}
		\item[(1)] $\ell_A(L_1(\au,\q,M;n))$ is a constant for all $n \gg 0$.
		\item[(2)] $\ell_A(M/\au\,M)= c \cdot e_0(\q;M)+ \ell_A(L_1(\au,\q,M;n))$ for all $n \gg 0$, where $c=c_1 \cdot\ldots \cdot c_d$.
	\end{itemize}
\end{theorem}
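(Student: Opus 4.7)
The plan is to compute the Euler characteristic of the complex $\mathcal{L}_\bullet(\underline{a},\q,M;n)$ in two different ways for $n \gg 0$, and to identify the homology of that complex using the vanishing already established. By Theorem \ref{th-formula} applied with $t = d-1$, we have $L_i(\underline{a},\q,M;n) = 0$ for all $i \geq 2$ and all $n$, so the only potentially nonzero homologies of $\mathcal{L}_\bullet(\underline{a},\q,M;n)$ are $L_0$ and $L_1$. For $n$ large every term $\mathcal{L}_i(\underline{a},\q,M;n) = \bigoplus M/\q^{n - c_{j_1} - \ldots - c_{j_i}}M$ has finite length (since $\q$ is an ideal of definition on $M$), so the Euler characteristic identity
\[
\ell_A(L_0(\underline{a},\q,M;n)) - \ell_A(L_1(\underline{a},\q,M;n)) = \sum_{i=0}^{d} (-1)^i \ell_A(\mathcal{L}_i(\underline{a},\q,M;n))
\]
is available.

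The first step is to identify $L_0$. From the definition, $L_0(\underline{a},\q,M;n)$ is the cokernel of $\mathcal{L}_1 \to \mathcal{L}_0$, which unwinds to $M/(\underline{a}M + \q^n M)$. Since $\underline{a}$ is a system of parameters of $M$, $M/\underline{a}M$ has finite length, so $\mathfrak{m}^k M \subseteq \underline{a}M$ for some $k$, and therefore $\q^n M \subseteq \mathfrak{m}^n M \subseteq \underline{a}M$ for $n$ sufficiently large. This yields $L_0(\underline{a},\q,M;n) = M/\underline{a}M$ and $\ell_A(L_0) = \ell_A(M/\underline{a}M)$ for $n \gg 0$.

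The second step is to evaluate the right-hand side for $n \gg 0$. Let $P(k) = \ell_A(M/\q^k M)$; by Notation \ref{not1}, $P$ agrees with a polynomial of degree $d$ in $k$ with leading coefficient $e_0(\q;M)/d!$ for $k$ large. Then
\[
\sum_{i=0}^{d} (-1)^i \ell_A(\mathcal{L}_i(\underline{a},\q,M;n)) = \sum_{I \subseteq \{1,\ldots,d\}} (-1)^{|I|} P\Bigl(n - \sum_{j \in I} c_j\Bigr) = (\Delta_{c_1} \cdots \Delta_{c_d} P)(n),
\]
where $\Delta_c f(n) := f(n) - f(n-c)$. Each operator $\Delta_{c_j}$ drops the polynomial degree by one and multiplies the new top-degree coefficient by $c_j$, so iterating all $d$ of them collapses $P$ to the constant $c_1 \cdots c_d \cdot e_0(\q;M) = c \cdot e_0(\q;M)$.

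Combining the two computations yields, for $n \gg 0$,
\[
\ell_A(M/\underline{a}M) - \ell_A(L_1(\underline{a},\q,M;n)) = c \cdot e_0(\q;M),
\]
which gives both assertions (1) and (2) simultaneously. The main conceptual inputs are the vanishing $L_i = 0$ for $i \geq 2$ (from Theorem \ref{th-formula}) and the identification of $L_0$; the only routine technicality is the finite-difference identity $\Delta_{c_1}\cdots\Delta_{c_d} P = c_1\cdots c_d \cdot e_0(\q;M)$, which can be checked either by expanding $P$ in the binomial basis $\binom{k}{j}$ or by a short induction on $d$.
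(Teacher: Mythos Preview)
Your proof is correct and follows essentially the same approach as the paper: both compute the Euler characteristic of $\mathcal{L}_\bullet(\underline{a},\q,M;n)$ in two ways, invoke Theorem~\ref{th-formula} for the vanishing $L_i=0$ ($i\ge 2$), identify $L_0 = M/\underline{a}M$ for $n\gg 0$, and evaluate the alternating sum of module lengths as the weighted $d$-fold difference $c\cdot e_0(\q;M)$. Your write-up is slightly more explicit about the finite-difference computation, but the logic is identical.
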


\begin{proof}
	Note that the alternating sum of the lengths of modules in the complex $\mathcal{L}_{\bullet}(\au, \mathfrak{q},M; n)$ is
	\[
	\sum_{i=0}^{d} (-1)^i \sum_{1 \leq j_1 < \ldots < j_i \leq d}
	\ell_A(M/\mathfrak{q}^{n-c_{j_1}-\ldots- c_{j_i}}M),
	\]
	which is a weighted $d$-fold difference operator of Hilbert-Samuel polynomial
	for all $n \gg 0$ and hence is a constant $c \cdot e_0(\q;M)$. Also, it coincides with the Euler characteristic
	\[
	\chi_A (\mathcal{L}_{\bullet}(\au, \mathfrak{q},M; n)) = \sum_{i \geq 0}(-1)^i \ell_A(L_i(\au,\q,M;n)) \text{ for all } n \gg 0,
	\]
	see \cite{KSCH} for more detail.
    As $L_0(\au, \mathfrak{q},M; n) \cong M / (\au M, \q^nM) = M / \au M$ for all $n \gg 0$ since $\q^nM \subseteq \au M$, therefore (2) follows from theorem \ref{th-formula}. Also, (1) follows from (2). This completes the argument.
\end{proof}

Now, we describe the length $\ell_A(L_1(\au,\q,M;n))$.

\begin{proposition}\label{prop-mul}
	With the previous notations, if $\au^\star G_A(\q)$ contains a $G_M(\q)$-regular sequence $\underline{b}^\star=b_1^\star,\ldots,b_{d-1}^\star$
	 such that $\deg b_i^\star=\beta_i$ and we choose $b_i$ for $i=1,\ldots,d-1$ as in Lemma \ref{lem4}. Then
	$\ell_A (L_1(\au, \mathfrak{q},M; n))$ might be broken into two pieces.
	That is,
	\[
	\ell_A(L_1(\au, \mathfrak{q},M; n)) = \ell_A([\underline{b}^\star G_M(\mathfrak{q})
	: \underline{a}^\star / (\underline{b}^\star G_M(\mathfrak{q}))]_{n + \beta - \overline{c}-1}) + \ell_n,
	\]
	where 
	$$
	\ell_n = \ell_A (\cap_{i = 1}^d(\underline{b}, \mathfrak{q}^{n + \beta - \overline{c}_i})M :_M a_i / (\cap_{i = 1}^d(\underline{b}, \mathfrak{q}^{n + \beta - \overline{c}_i})M :_M a_i) \cap (\underline{b}, \mathfrak{q}^{n + \beta - \overline{c} - 1})M),
	$$ 
	with
	$ c=c_1 \ldots c_d, \overline{c}=\sum_{i=1}^dc_i, \overline{c_i}=c_1+\ldots+c_{i-1}+c_{i+1}+\ldots+c_d$,
	and $\beta=\sum_{j=1}^{d-1}\beta_j$.
	
	Moreover, for $n \gg 0$, all of the lengths involved here are constants and independent
	of the choice of $\underline{b}^\star$. We write $\mathfrak{x} = \ell_A([\underline{b}^\star G_M(\mathfrak{q})
	: \underline{a}^\star / (\underline{b}^\star G_M(\mathfrak{q}))]_n)$ and $\ell = \ell_n$ for $n \gg 0$.
\end{proposition}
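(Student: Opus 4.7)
My plan is to begin from the isomorphism furnished by Theorem \ref{th-formula} applied with $t = d-1$, namely $L_1(\au,\q,M;n) \cong N/D$, where
$N := \bigcap_{i=1}^{d}\bigl((\underline{b},\q^{n+\beta-\overline{c_i}})M :_M a_i\bigr)$ and $D := (\underline{b},\q^{n+\beta-\overline{c}})M$. Setting $m := n+\beta-\overline{c}-1$ and introducing the intermediate module $P := (\underline{b},\q^m)M$, I have $D \subseteq P$, so $D \subseteq N\cap P \subseteq N$, and the short exact sequence
\[
0 \to (N \cap P)/D \to N/D \to N/(N \cap P) \to 0
\]
delivers the length decomposition $\ell_A(L_1(\au,\q,M;n)) = \ell_A((N\cap P)/D) + \ell_A(N/(N\cap P))$. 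The second summand is already the $\ell_n$ appearing in the statement, so the substance of the argument is to identify $(N \cap P)/D$ with the stated graded piece of the colon module in $G_M(\q)$.

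For that identification I would combine two ingredients. Since every $b_k$ lies in $\au A$ by Lemma \ref{lem4}, the containment $(\underline{b})M \subseteq N$ is immediate, so the modular law gives $N \cap P = (\underline{b})M + (N \cap \q^m M)$. The $G_M(\q)$-regularity of $\underline{b}^\star$ now invokes Valabrega--Valla \cite{VV} in the form $(\underline{b})M \cap \q^k M = \sum_{i=1}^{d-1} b_i\,\q^{k-\beta_i}M$ for all $k$, and in particular $(\underline{b},\q^{m+1})M \cap \q^m M = (\underline{b})M \cap \q^m M + \q^{m+1}M$. Together with the inclusion $\q^{m+1}M \subseteq N$ (which is immediate from the defining equality $m+1+c_i = n+\beta-\overline{c_i}$), a routine use of the isomorphism theorems reduces $(N \cap P)/D$ to $(N \cap \q^m M)\big/\bigl((\underline{b},\q^{m+1})M \cap \q^m M\bigr)$. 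Unwinding the definition of $\underline{b}^\star G_M(\q)$ degree by degree (once again via Valabrega--Valla) and the definition of the colon in $G_A(\q)$ then matches this identically with $[\underline{b}^\star G_M(\q) : \underline{a}^\star G_A(\q) / \underline{b}^\star G_M(\q)]_m$.

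For the constancy assertion for $n \gg 0$, Theorem \ref{th-mul}(1) already gives that $\ell_A(L_1(\au,\q,M;n))$ is constant. The length $\ell_A((N \cap P)/D)$ is the Hilbert-function value in degree $m$ of the finitely generated graded $G_A(\q)$-module $[\underline{b}^\star G_M(\q) : \underline{a}^\star G_A(\q)]/\underline{b}^\star G_M(\q)$. This module is annihilated by $\underline{a}^\star G_A(\q)$ and sits inside $G_M(\q)/\underline{b}^\star G_M(\q)$, which has Krull dimension $1$ by the regularity of $\underline{b}^\star$; its Hilbert function is therefore eventually polynomial of degree $\leq 0$, and the bound imposed by the short exact sequence forces that polynomial to be constant, so $\ell_n$ is constant too. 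Independence of $\underline{b}^\star$ I would obtain by reinterpreting the stable value of $\ell_A((N \cap P)/D)$ as $\ell_A([\Ext^{d-1}_{G_A(\q)}(G_A(\q)/\underline{a}^\star G_A(\q), G_M(\q))]_{n-\overline{c}-1})$, a description manifestly independent of $\underline{b}^\star$ and which feeds into Theorem \ref{th-1}; once one summand is intrinsic, the other inherits independence from the $\underline{b}$-free quantity $\ell_A(L_1(\au,\q,M;n))$.

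The main obstacle I foresee is the graded identification of $(N \cap P)/D$ with the colon piece, because this is the step where the hypothesis of $\underline{b}^\star$-regularity has to be extracted in useful form through Valabrega--Valla, and several intersections and colons must be tracked simultaneously on the module and on the graded-ring side; everything else reduces to the short exact sequence above, standard Hilbert-function bookkeeping, and Theorem \ref{th-mul}(1).
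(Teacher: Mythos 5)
Your proposal is correct and takes essentially the same route as the paper's own proof: both rest on the identification $L_1(\au,\q,M;n)\cong N/D$ from Theorem \ref{th-formula}, the short exact sequence splitting its length into the degree-$(n+\beta-\overline{c}-1)$ piece of $[\underline{b}^\star G_M(\q):\underline{a}^\star/\underline{b}^\star G_M(\q)]$ plus the module computing $\ell_n$, constancy for $n\gg 0$ from the dimension-one quotient $G_M(\q)/\underline{b}^\star G_M(\q)$ combined with Theorem \ref{th-mul}(1), and independence of $\underline{b}^\star$ via the isomorphism with $\Ext^{d-1}_{G_A(\q)}(G_A(\q)/\underline{a}^\star G_A(\q),G_M(\q))[-\beta]$. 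The only differences are presentational (you rebuild the subobject of the exact sequence from the submodule lattice via the modular law and the Valabrega--Valla criterion, where the paper identifies the colon module degreewise first, and your appeal to Lemma \ref{lem4} for $(\underline{b})M\subseteq N$ is unnecessary since that containment is automatic), so no substantive gap remains.
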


\begin{proof}
	As $\underline{b}^\star$ is a $G_M(\q)$-regular sequence, hence $G_M(\mathfrak{q}) / (\underline{b}^\star)G_M(\q) \cong G_{M / \underline{b}M}(\mathfrak{q})$,
	see \cite{VV}. Therefore,
	it is easily seen that
	\[
	[\underline{b}^\star G_M(\mathfrak{q}) : \underline{a}^\star / \underline{b}^\star G_M(\mathfrak{q})]_n \cong (\cap_{i = 1}^d(\underline{b}, \mathfrak{q}^{n + c_i + 1})M :_Ma_i) \cap (\underline{b}, \mathfrak{q}^n)M / (\underline{b}, \mathfrak{q}^{n + 1})M.
	\]
	Now, we have the following short exact sequence
	\begin{gather*}
		0 \to [\underline{b}^\star G_M(\mathfrak{q}) : \underline{a}^\star / \underline{b}^\star G_M(\mathfrak{q})]_{n + \beta - \overline{c} - 1}
		\to L_1(\au, \mathfrak{q},M; n) \to \\ \to (\cap_{i = 1}^d(\underline{b}, \mathfrak{q}^{n + \beta - \overline{c}_i})M :_M a_i) / (\cap_{i = 1}^d(\underline{b}, \mathfrak{q}^{n + \beta - \overline{c}_i})M :_M a_i) \cap (\underline{b}, \mathfrak{q}^{n + \beta - \overline{c} - 1})M \to 0,
	\end{gather*}
	see theorem \ref{th-formula}.
	By counting the lengths, it provides the first
	equality of the statement. The length of the
	module in the middle
	is constant for $n \gg 0$, see \ref{th-mul}. Also, the length of the module in the left is constant for $n \gg 0$ since it is of dimension 1. By comparing the Hilbert polynomials, this
	proves that all the lengths are constants for all $n \gg 0$. 
	
	Note that
	\[
	\underline{b}^\star G_M(\mathfrak{q}) : \underline{a}^\star / \underline{b}^\star G_M(\mathfrak{q}) \cong \Ext^{d - 1}_{G_A(\mathfrak{q})}(G_A(\mathfrak{q}) / \underline{a}^\star G_A(\mathfrak{q}) , G_M(\mathfrak{q}))[-\beta].
	\]
	Therefore, we conclude that
	$\ell_A([\underline{b}^\star G_M(\mathfrak{q})
	: \underline{a}^\star / \underline{b}^\star G_M(\mathfrak{q})]_n)$
	is independent of the choice of $\underline{b}^\star$, and consequently,
	$\ell$ is also independent of the choice of $\underline{b}^\star$.
	This completes the proof.
\end{proof}

Now, we have the main result of the section, which is also the consequence of previous two results.

\begin{corollary}\label{cor-mul}
	With the previous notations, if $\au^\star G_A(\q)$ contains a $G_M(\q)$-regular sequence $\underline{b}^\star=b_1^\star,\ldots,b_{d-1}^\star$
	and we choose $b_i$ for $i=1,\ldots,d-1$ as in Lemma \ref{lem4}. Then
	\[
	\ell_A(M/\au M) \geq c\cdot e_0(\q;M)+ \mathfrak{x}
	\]
	where $c=c_1 \cdot \ldots \cdot c_d$ and $\mathfrak{x} = \ell_A([\Ext^{d - 1}_{G_A(\mathfrak{q})}(G_A(\mathfrak{q}) / \underline{a}^\star G_A(\mathfrak{q}) , G_M(\mathfrak{q}))]_{n-\overline{c}-1})$ is a constant for all $n \gg 0$ and $\overline{c}=c_1+\ldots+c_d.$
\end{corollary}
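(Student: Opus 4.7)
The plan is to obtain this corollary as an essentially formal consequence of Theorem \ref{th-mul} and Proposition \ref{prop-mul}, since all the heavy lifting has been carried out in those two preceding results. Since the hypothesis is precisely that $\underline{a}^\star G_A(\mathfrak{q})$ contains a $G_M(\mathfrak{q})$-regular sequence of length $d-1$, Theorem \ref{th-mul}(2) applies directly and yields the exact identity
\[
\ell_A(M/\underline{a}M) = c \cdot e_0(\mathfrak{q};M) + \ell_A(L_1(\underline{a},\mathfrak{q},M;n))
\]
for all $n \gg 0$. Thus, to prove the inequality, the task reduces to bounding $\ell_A(L_1(\underline{a},\mathfrak{q},M;n))$ from below by $\mathfrak{x}$ for $n$ large.

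To carry out this bound, I would feed the regular sequence $\underline{b}^\star$ (with the $b_i$ chosen as in Lemma \ref{lem4}) into Proposition \ref{prop-mul}. The proposition splits the length $\ell_A(L_1(\underline{a},\mathfrak{q},M;n))$ into two nonnegative pieces:
\[
\ell_A(L_1(\underline{a},\mathfrak{q},M;n)) = \ell_A([\underline{b}^\star G_M(\mathfrak{q}) : \underline{a}^\star / \underline{b}^\star G_M(\mathfrak{q})]_{n+\beta-\overline{c}-1}) + \ell_n,
\]
each of which is eventually constant in $n$. Dropping the nonnegative summand $\ell_n$ immediately gives the desired inequality, once the first summand is identified with $\mathfrak{x}$.

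For this identification, I would invoke the isomorphism
\[
\underline{b}^\star G_M(\mathfrak{q}) : \underline{a}^\star / \underline{b}^\star G_M(\mathfrak{q}) \cong \Ext^{d-1}_{G_A(\mathfrak{q})}(G_A(\mathfrak{q})/\underline{a}^\star G_A(\mathfrak{q}), G_M(\mathfrak{q}))[-\beta]
\]
already established in the proof of Proposition \ref{prop-mul}. The graded shift by $-\beta$ translates the degree $n+\beta-\overline{c}-1$ on the left-hand side into degree $n-\overline{c}-1$ on the right-hand side, matching the formula for $\mathfrak{x}$ in the statement; independence of the choice of $\underline{b}^\star$ then follows automatically from the Ext description, as already noted in Proposition \ref{prop-mul}.

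The only mildly subtle point — and the closest thing to an obstacle — is bookkeeping with the degree shift: one must keep track of $\beta = \beta_1 + \cdots + \beta_{d-1}$ carefully when passing from the graded component of $\underline{b}^\star G_M(\mathfrak{q}) : \underline{a}^\star / \underline{b}^\star G_M(\mathfrak{q})$ appearing in Proposition \ref{prop-mul} to the graded component of the $\Ext$ module appearing in the statement. Beyond this indexing check, the argument is a direct concatenation of the two preceding results plus the trivial observation $\ell_n \geq 0$.
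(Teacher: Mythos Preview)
Your proposal is correct and follows exactly the approach the paper intends: the corollary is stated as an immediate consequence of Theorem~\ref{th-mul} and Proposition~\ref{prop-mul}, and your argument spells out precisely that concatenation, including the $\Ext$ identification and degree-shift bookkeeping already contained in the proof of Proposition~\ref{prop-mul}. There is nothing to add; the paper itself gives no separate proof beyond remarking that the corollary follows from those two results.
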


We mention a geometric application to local B\'ezout inequality in the affine plane $\mathbb{A}_k^2$.

\begin{remark}\label{Bezout}
	Let $\Bbbk$ be an algebraically close field and $A = \Bbbk[x,y]_{(x,y)}$ be a local ring. Also, let $f,g$ denote a system of parameters in $A$
	and $\mathfrak{m}$ denote the maximal ideal of $A$. Then $B := \Bbbk[X,Y] \cong G_A(\mathfrak{m})$ and $1 = e_0(\mathfrak{m};A)$. Then the above two results imply that
	\[
	e_0(f,g;A) \geq c \cdot d + t,
	\]
	where $t$ denotes the number of common tangents to $f,g$ at origin when counted with multiplicities. Note that $\ell_A([f^\star B :_B g^\star / f^\star B]_n) = t$ for all $n \gg 0$ (see \cite{BSCH}).
\end{remark}

\begin{problem}\label{Bezout-problem}
	Let $M=A=k[x_1,\ldots,x_d]_{(x_1,\ldots,x_d)}$ be the local ring and $\q=\mathfrak{m}=(x_1,\ldots,x_d)A$, where $d \geq 3$. Let $\au'^\star=a_1^\star,\ldots,a_{d-1}^\star$, then the author does not know the geometric interpretation of
	\[
	\mathfrak{x}= \ell_A([\Ext^{d - 1}_{G_A(\mathfrak{q})}(G_A(\mathfrak{q}) / \underline{a}^\star G_A(\mathfrak{q}) , G_M(\mathfrak{q}))]_n) = \ell_A([\au'^\star G_A(\mathfrak{m}) : a_d^\star / \au'^\star G_A(\mathfrak{m})]_{n+c_1+\ldots+c_{d-1}})
	\]
	for all $n \gg 0$. This problem can be related to the homological terms as in case of $d=2$.
\end{problem}

In the next, we present another consequence. More precisely, there is an upper bound to $\ell_A(M/\au\,M) - e_0(\au;M)\geq 0.$

\begin{corollary}\label{cor-ub1}
	With the previous notations, if $\au^\star G_A(\q)$ contains a $G_M(\q)$-regular sequence $\underline{b}^\star=b_1^\star,\ldots,b_{d-1}^\star$, then
	\begin{itemize}
		\item[(1)] $\ell_A(M/\au\,M) - e_0(\au;M) \leq \ell_A(L_1(\au,\q,M;n))$ for all $n \gg 0$.
		\item[(2)] Equality occurs when $\au^\star$ is a system of parameters of $G_M(\q)$. The converse is not true in general.
	\end{itemize}
\end{corollary}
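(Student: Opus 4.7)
The plan is to reduce the entire statement to the classical local B\'ezout inequality
\[
c_1 \cdot \ldots \cdot c_d \cdot e_0(\q;M) \leq e_0(\au;M),
\]
valid for any system of parameters $\au \subseteq \q$ of $M$ (see \cite{BSCH} and the references therein). Invoking Theorem \ref{th-mul}(2), for all $n \gg 0$ I would write
\[
\ell_A(M/\au M) - e_0(\au;M) = \bigl(c\cdot e_0(\q;M) - e_0(\au;M)\bigr) + \ell_A(L_1(\au,\q,M;n)),
\]
and the local B\'ezout inequality makes the parenthetical term non-positive, giving (1) at once.

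For the forward implication in (2), the same substitution shows that equality in (1) is equivalent to $e_0(\au;M) = c\cdot e_0(\q;M)$. Under the hypothesis that $\au^\star$ is a homogeneous system of parameters of $G_M(\q)$, I would combine two classical ingredients: the comparison $e_0(\au;M) = e_0(\au^\star;G_M(\q))$, available because $\au^\star$ is a SOP of $G_M(\q)$ and $\dim G_M(\q) = \dim M = d$; and the graded B\'ezout/associativity formula for a homogeneous SOP of degrees $c_1,\ldots,c_d$ in a standard graded module, which gives
\[
e_0(\au^\star;G_M(\q)) = c_1 \cdot \ldots \cdot c_d \cdot e_0(\mathfrak{M};G_M(\q)) = c \cdot e_0(\q;M),
\]
where $\mathfrak{M}$ denotes the graded maximal ideal of $G_A(\q)$ and the last equality reflects the agreement of Hilbert polynomials of $M$ with respect to $\q$ and of $G_M(\q)$. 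Chaining the two yields the required equality.

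For the final clause I would exhibit an explicit example in which $e_0(\au;M) = c \cdot e_0(\q;M)$ holds for a separate numerical reason, for instance one where $\au$ is itself an $M$-regular sequence so that $\ell_A(M/\au M) = e_0(\au;M)$ and the two multiplicities coincide by accident, yet $\au^\star G_M(\q)$ fails to be $\mathfrak{M}$-primary because $G_M(\q)$ has embedded components or torsion killed by $\au^\star$. Producing such a minimal and convincing example is the main technical obstacle; once it is in hand, the numerical content of (1) and of the forward direction of (2) is essentially bookkeeping on top of the multiplicity formulae already recalled in the paper.
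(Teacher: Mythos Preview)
Your argument for (1) is exactly the paper's: combine Theorem~\ref{th-mul}(2) with the local B\'ezout inequality $c\cdot e_0(\q;M)\le e_0(\au;M)$ from \cite{BSCH}. The reduction of equality in (1) to $e_0(\au;M)=c\cdot e_0(\q;M)$ is likewise identical.

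For the forward direction of (2) the paper simply invokes \cite[Theorem~5.1]{BSCH}, which directly gives $e_0(\au;M)=c\cdot e_0(\q;M)$ whenever $\au^\star$ is a system of parameters of $G_M(\q)$. You instead factor this through the intermediate identity $e_0(\au;M)=e_0(\au^\star;G_M(\q))$ and then apply the graded B\'ezout formula. That is a legitimate route, but be aware that the first identity is not a triviality: it is essentially the content of the cited theorem in \cite{BSCH} (it amounts to the vanishing of $\chi_A(\au,\q,M;n)$ for $n\gg 0$), so you have not really avoided the citation, only relocated it.

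For the final clause your proposal is incomplete, as you yourself note. The paper does not construct a counterexample either; it again defers to \cite[Theorem~5.1]{BSCH}, where the failure of the converse is discussed. Your heuristic (an $M$-regular $\au$ with $\au^\star G_M(\q)$ not $\mathfrak{M}$-primary while $e_0(\au;M)=c\cdot e_0(\q;M)$) points in the right direction, but without an explicit example it does not establish the claim. If you want a self-contained proof, you must actually produce one; otherwise, citing \cite{BSCH} as the paper does is the honest move.
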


\begin{proof}
	Since $c \cdot e_0(\mathfrak{q}; M) \le e(\underline{a}; M)$ (see \cite{BSCH}).
	Therefore claim in (1) follows from previous theorem \ref{th-mul}.
	Note that
	\[
	\ell_A(M / \underline{a}M) - e_0(\underline{a}; M) = \ell_A(L_1(\underline{a}, \mathfrak{q},M; n)) \text{ for all } n \gg 0
	\Leftrightarrow e_0(\underline{a}; M) = c \cdot e_0(\mathfrak{q}; M),
	\]
	(see theorem \ref{th-mul}).
	Now, the claim in (2) follows
	by \cite[Theorem 5.1]{BSCH}.
\end{proof}

\section{An Euler Characteristic}

With the notations of the previous section, we have the following lemma.

\begin{lemma}\label{cm1}
	With the previous notations, if $\au'^\star=a_1^\star,\ldots,a_{d-1}^\star$ is a $G_M(\q)$-regular sequence, then
	\begin{gather*}
	e_0(\au; M) = c\cdot e_0(\q;M) + \ell_A(\q^nM / \sum \limits_{i = 1}^d a_i\q^{n - c_i}M) \\ - \ell_A(\sum \limits_{i = 1}^{d-1} a_i\q^{n + c_d - c_i}M :_M a_d \cap \q^nM / \sum \limits_{i = 1}^{d-1} a_i\q^{n - c_i}M)
	\end{gather*}
	for all $n \gg 0$, where $c=c_1\cdot \ldots \cdot c_d$.
\end{lemma}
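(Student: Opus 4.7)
The plan is to express $e_0(\au; M)$ as a Koszul Euler characteristic, split it along the short exact sequence of complexes from Notation \ref{kos-1}(2), and use the $R_M(\q)$-regularity supplied by Corollary \ref{cor2} to compute the piece coming from the subcomplex $K_\bullet(\au, \q, M; n)$ explicitly. Concretely, I would apply additivity of Euler characteristics to
\[
0 \to K_\bullet(\au, \q, M; n) \to K_\bullet(\au; M) \to \mathcal{L}_\bullet(\au, \q, M; n) \to 0
\]
to obtain, for $n \gg 0$,
\[
e_0(\au; M) = \chi(K_\bullet(\au; M)) = \chi(K_\bullet(\au, \q, M; n)) + c \cdot e_0(\q; M),
\]
using the classical identity $\chi(K_\bullet(\au; M)) = e_0(\au; M)$ for a system of parameters (see \cite{hM}) together with the computation $\chi(\mathcal{L}_\bullet(\au, \q, M; n)) = c \cdot e_0(\q; M)$ for $n \gg 0$ already carried out in the proof of Theorem \ref{th-mul}. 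This reduces the lemma to identifying $\chi(K_\bullet(\au, \q, M; n))$ with the claimed difference of two lengths.

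I would then invoke the Rees-module interpretation $K_\bullet(\au, \q, M; n) \cong [K_\bullet(\underline{aT^c}; R_M(\q))]_n$ of Notation \ref{kos-1}(1). By hypothesis $\au'^\star = a_1^\star, \ldots, a_{d-1}^\star$ is $G_M(\q)$-regular, so Corollary \ref{cor2} gives that $a_1 T^{c_1}, \ldots, a_{d-1} T^{c_{d-1}}$ is an $R_M(\q)$-regular sequence. Writing $K_\bullet(\underline{aT^c}; R_M(\q))$ as the mapping cone of multiplication by $a_d T^{c_d}$ on $K_\bullet(\underline{a'T^{c'}}; R_M(\q))$ and reading off the long exact homology sequence yields $H_i(\underline{aT^c}; R_M(\q)) = 0$ for $i \geq 2$, together with
\[
H_0 \cong R_M(\q) / (\underline{aT^c}) R_M(\q) \quad \text{and} \quad H_1 \cong 0 :_{R_M(\q)/(\underline{a'T^{c'}}) R_M(\q)} a_d T^{c_d}.
\]
Extracting the degree-$n$ component realizes $[H_0]_n$ as $\q^n M / \sum_{i=1}^d a_i \q^{n-c_i} M$ and $[H_1]_n$ as $\bigl(\sum_{i=1}^{d-1} a_i \q^{n+c_d-c_i} M :_M a_d\bigr) \cap \q^n M / \sum_{i=1}^{d-1} a_i \q^{n-c_i} M$, so that $\chi(K_\bullet(\au, \q, M; n)) = \ell_A([H_0]_n) - \ell_A([H_1]_n)$ gives the stated formula.

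The only point requiring real care is finite-length bookkeeping: to make all three Euler characteristics simultaneously well-defined, each homology module in sight must have finite length. This is automatic for $n \gg 0$ because $\q$ is an ideal of definition for $M$ and $\au$ is a system of parameters, so that $\ell_A(M/\q^n M)$ is finite and the long exact sequence associated to the short exact sequence of complexes forces the homology of $K_\bullet(\au, \q, M; n)$ to inherit finite length from the finite-length flanking terms. Beyond this routine check I expect no substantive obstacle: the argument is essentially just additivity of $\chi$ combined with the vanishing of higher Koszul homology of the regular sequence $\underline{a'T^{c'}}$ on $R_M(\q)$.
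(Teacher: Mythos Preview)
Your proposal is correct and follows essentially the same route as the paper: both arguments split along the short exact sequence $0 \to K_\bullet(\au,\q,M;n) \to K_\bullet(\au;M) \to \mathcal{L}_\bullet(\au,\q,M;n) \to 0$, use the $G_M(\q)$-regularity of $\au'^\star$ (via Corollary~\ref{cor2}) to reduce $\chi(K_\bullet(\au,\q,M;n))$ to $\ell_A(H_0)-\ell_A(H_1)$ with the stated explicit descriptions, and identify the $\mathcal{L}_\bullet$-piece with $c\cdot e_0(\q;M)$. The only cosmetic difference is that the paper writes out the resulting four-term exact sequence in homology and appeals to Theorem~\ref{th-mul}(2), whereas you phrase the same bookkeeping directly as additivity of Euler characteristics together with Serre's formula $\chi(K_\bullet(\au;M))=e_0(\au;M)$.
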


\begin{proof}
	Note $H_i(\au;M)=0$ for all $i > 1$ since $a_1,\ldots,a_{d-1}$ is $M$-regular sequence, cf. \cite{VV}. Also, $L_i(\au,\q,M;n)=0$ for all $i>1$ and for all $n$, see \ref{th-formula}. Moreover,
	\[
	L_0(\au,\q,M;n) \cong M / (\au,\q^n)M \cong M/\au M = H_0(\au;M)
	\]
	since $\q^nM \subseteq \au M$
	for $n \gg 0$. Therefore, from the long exact homology sequence coming from the short exact sequence in \ref{kos-1}, we get the following exact sequence
	\[
	0 \to H_1(\au,\q,M;n) \to H_1(\au;M) \to L_1(\au,\q,M;n) \to H_0(\au,\q,M;n) \to 0,
	\]
	for all $n \gg 0$. Note that $H_1(\au;M) \cong \au'M :_M a_d/\au'M$, where $\au'=a_1,\ldots,a_{d-1}$. By Theorem \ref{th-mul}, we get
	\[
	\ell_A(M / \au M)-\ell_A(\au'M :_M a_d/\au'M) = c\cdot e_0(\q;M) + \ell_A(H_0(\au,\q,M;n)) - \ell_A(H_1(\au,\q,M;n))
	\]
	for all $n \gg 0$. Finally, note that %$\ell_A(M / \au M)-\ell_A(\au'M :_M a_d/\au'M) = e_0(\au;M),$  
	\[
	\ell_A(M / \au M)-\ell_A(\au'M :_M a_d/\au'M) = e_0(\au;M), H_0(\au,\q,M;n) = \q^nM / \sum \limits_{i = 1}^d a_i\q^{n - c_i}M
	\]
	and
	\[
	H_1(\au,\q,M;n) = [H_1(\underline{aT^c};R_M(\q))]_n \cong \sum \limits_{i = 1}^{d-1} a_i\q^{n + c_d - c_i}M :_M a_d \cap \q^nM / \sum \limits_{i = 1}^{d-1} a_i\q^{n - c_i}M
	\]
	since $a_1T^{c_1},\ldots,a_{d-1}T^{c_{d-1}}$ is an $R_M(\q)$-regular sequence, see \ref{cor2}. This finishes the proof.
\end{proof}

Let $\chi_A(\au,\q,M;n)$ denote the Euler characteristic of the complex $K_{\bullet}(\au,\q,M;n)$. With the assumption of previous lemma,
\[
\chi_A(\au,\q,M;n) = \ell_A(\q^nM / \sum \limits_{i = 1}^d a_i\q^{n - c_i}M) - \ell_A(\sum \limits_{i = 1}^{d-1} a_i\q^{n + c_d - c_i}M :_M a_d \cap \q^nM / \sum \limits_{i = 1}^{d-1} a_i\q^{n - c_i}M)
\]
which is a constant. Even in a more general situation, we have
\[
\chi_A(\au,\q,M;n) = e_0(\au;M)-c\cdot e_0(\q;M) \text{ for all } n \gg 0,
\]
see \cite{BSCH} or \cite{KSCH}. Moreover, the authors mentioned a problem of giving an interpretation to $\chi_A(\au,\q,M;n)$ for $n \gg 0$. In case of $M=A$, Bo\u{d}a-Schenzel \cite{BSCH} proved that $\chi_A(\au,\q,M;n) \geq 0$ for all $n \gg 0$. This can be generalized for an $A-$module $M$ with slight modification. In the following, in a particular case, we bound this Euler characteristic from the upper side, and also discuss the equality in terms of Cohen-Macauleyness of $M$.

\begin{corollary}\label{cor-ub2}
	With the previous notations, if $\au'^\star=a_1^\star,\ldots,a_{d-1}^\star$ is a $G_M(\q)$-regular sequence, then
	\begin{itemize}
		\item[(1)] $\chi_A(\au,\q,M;n) \leq \ell_A(L_1(\au,\q,M;n))$ for all $n \gg 0$.
		\item[(2)] Equality occurs if and only if $M$ is Cohen-Macauley.
	\end{itemize}
\end{corollary}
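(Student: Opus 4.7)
The plan is to read off both claims from the 4-term exact sequence already produced in the proof of Lemma \ref{cm1}, by taking lengths. Under the hypothesis that $\au'^\star$ is $G_M(\q)$-regular of length $d-1$, the sequence $a_1,\ldots,a_{d-1}$ is $M$-regular by \cite{VV}, so $H_i(\au;M)=0$ for $i \ge 2$; moreover Theorem \ref{th-formula} applied to $\au'^\star$ gives $L_i(\au,\q,M;n)=0$ for $i \ge 2$, and $L_0(\au,\q,M;n) \cong H_0(\au;M) = M/\au M$ for $n \gg 0$ since $\q^nM \subseteq \au M$. The long exact homology sequence attached to
\[
0 \to K_{\bullet}(\au,\q,M;n) \to K_{\bullet}(\au;M) \to \mathcal{L}_{\bullet}(\au,\q,M;n) \to 0
\]
therefore collapses, exactly as in the proof of Lemma \ref{cm1}, to
\[
0 \to H_1(\au,\q,M;n) \to H_1(\au;M) \to L_1(\au,\q,M;n) \to H_0(\au,\q,M;n) \to 0,
\]
and it also yields the auxiliary vanishing $H_i(\au,\q,M;n) = 0$ for all $i \ge 2$.

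Next I will take alternating lengths of the 4-term sequence to get
\[
\ell_A(L_1(\au,\q,M;n)) = \ell_A(H_1(\au;M)) + \bigl(\ell_A(H_0(\au,\q,M;n)) - \ell_A(H_1(\au,\q,M;n))\bigr).
\]
By the vanishing of the higher $H_i(\au,\q,M;n)$, the parenthesised quantity is exactly the Euler characteristic $\chi_A(\au,\q,M;n)$, so
\[
\ell_A(L_1(\au,\q,M;n)) - \chi_A(\au,\q,M;n) = \ell_A(H_1(\au;M)) \ge 0,
\]
which settles (1).

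For (2), equality is equivalent to $H_1(\au;M) = 0$. The $M$-regularity of $\au'$ provides the classical isomorphism $H_1(\au;M) \cong \au'M :_M a_d / \au'M$, so this vanishing is equivalent to $a_d$ being a non-zerodivisor on $M/\au'M$, i.e.\ to $\au$ being an $M$-regular sequence. Since $\au$ is a system of parameters with $\dim M = d$, the existence of such a regular system of parameters is precisely the Cohen-Macaulay condition on $M$. I do not foresee a real obstacle; the only delicate point is the simultaneous vanishing that truncates the long exact sequence as displayed, and this has already been worked out in the proof of Lemma \ref{cm1}.
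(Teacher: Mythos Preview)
Your proof is correct. The route differs slightly from the paper's: the paper combines Theorem~\ref{th-mul} with the identity $\chi_A(\au,\q,M;n) = e_0(\au;M) - c\cdot e_0(\q;M)$ (recorded just before the corollary) to rewrite $\ell_A(L_1(\au,\q,M;n)) - \chi_A(\au,\q,M;n)$ as $\ell_A(M/\au M) - e_0(\au;M)$, and then invokes the classical fact that this difference is nonnegative and vanishes precisely when $M$ is Cohen--Macaulay. You instead read the same difference directly from the four-term exact sequence of Lemma~\ref{cm1} as $\ell_A(H_1(\au;M))$, and characterise its vanishing via the Koszul regularity criterion. The two arguments are equivalent---indeed $e_0(\au;M) = \ell_A(M/\au M) - \ell_A(H_1(\au;M))$ once the higher Koszul homology vanishes---but your phrasing is a touch more self-contained: it avoids the separate appeal to the multiplicity formula for $\chi_A$ and keeps the whole computation inside the homological exact sequence already set up in Lemma~\ref{cm1}.
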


\begin{proof}
	Since $\ell_A(M/\au\,M) \geq e_0(\underline{a}; M)$,
	therefore claim in (1) follows from previous theorem \ref{th-mul}. Note that
	\[
	\chi_A(\au,\q,M;n) = \ell_A(L_1(\au,\q,M; n)) \text{ for all } n \gg 0
	\Leftrightarrow \ell_A(M / \au M) = e_0(\au; M), 
	\]
	see theorem \ref{th-mul}.
	The latter is equivalent to the fact that
	$M$ is Cohen-Macaulay, see \cite{BH}. This finishes (2).
\end{proof}

In the following, we discuss a few more properties of Euler characteristic $\chi_A(\au,\q,M;n)$. We need the following lemma.

\begin{lemma}\label{euler1}
	With the previous notations, assume that $a \in \q^c \setminus \q^{c+1}$ such that $\dim M/aM = d-1$. Then the following holds.
	\begin{itemize}
		\item[(1)] If $\dim 0:_M a \leq d-2$, then $c\cdot e_0(\q;M) \leq e_0(\q; M/aM)$. Moreover, equality occurs if and only if $\deg \ell_A(\q^nM :a / (\q^{n-c}M + 0:_Ma)) \leq d-2$ for all $n \gg 0$.
		\item[(2)] If $\dim 0:_M a = d-1$, then $c\cdot e_0(\q;M) + e_0(\q; 0:_Ma) \leq e_0(\q; M/aM)$. Moreover, equality occurs if and only if $\deg \ell_A(\q^nM :a / (\q^{n-c}M + 0:_Ma)) \leq d-2$ for all $n \gg 0$.
		\end{itemize}
\end{lemma}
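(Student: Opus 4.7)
The plan is to establish a master length identity among the Hilbert--Samuel functions of $M$, $M/aM$, $\q^nM :_M a$, and $0:_Ma$, and then extract the two inequalities by comparing the coefficients of $n^{d-1}$.

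First, I would apply length additivity to the short exact sequence $0 \to aM \to M \to M/aM \to 0$ modulo $\q^n$ to obtain $\ell_A((M/aM)/\q^n(M/aM)) = \ell_A(M/\q^nM) - \ell_A(aM/(aM \cap \q^nM))$. The multiplication map $m \mapsto am$ induces an isomorphism $M/(\q^nM :_M a) \cong aM/(aM \cap \q^nM)$, and $a \in \q^c$ forces $\q^{n-c}M \subseteq \q^nM :_M a$. Inserting the intermediate submodule $\q^{n-c}M + 0:_Ma$ into this last containment produces the master identity
\begin{align*}
\ell_A((M/aM)/\q^n(M/aM)) &= \bigl[\ell_A(M/\q^nM) - \ell_A(M/\q^{n-c}M)\bigr] \\
&\quad + \ell_A\bigl((\q^nM :_M a)/(\q^{n-c}M + 0:_Ma)\bigr) \\
&\quad + \ell_A\bigl((0:_Ma)/((0:_Ma) \cap \q^{n-c}M)\bigr).
\end{align*}

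Next I would compare leading coefficients for $n \gg 0$. The left-hand side is a polynomial of degree $d-1$ with leading coefficient $e_0(\q;M/aM)/(d-1)!$; the bracketed difference is a polynomial of degree $d-1$ with leading coefficient $c \cdot e_0(\q;M)/(d-1)!$; and by the Artin--Rees lemma the filtration $(0:_Ma) \cap \q^n M$ on $0:_Ma$ is eventually $\q$-stable, so the third term is a polynomial of degree $\dim(0:_Ma)$ with leading coefficient $e_0(\q;0:_Ma)/(\dim(0:_Ma))!$. The middle term is then forced to be polynomial for $n \gg 0$ and, being a length, has nonnegative leading coefficient.

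In case (1), where $\dim(0:_Ma) \leq d-2$, the $0:_Ma$-term contributes nothing at $n^{d-1}$, and matching the $n^{d-1}$-coefficients in the master identity yields $e_0(\q;M/aM) - c \cdot e_0(\q;M) = (d-1)! \, \lambda$, where $\lambda \geq 0$ is the coefficient of $n^{d-1}$ in $\ell_A((\q^nM :_M a)/(\q^{n-c}M + 0:_Ma))$; this is the claimed inequality, with equality precisely when $\lambda = 0$, i.e.\ when that polynomial has degree $\leq d-2$. Case (2) runs in parallel: the $0:_Ma$-term now contributes $e_0(\q;0:_Ma)/(d-1)!$ at $n^{d-1}$, producing the additive correction $e_0(\q;0:_Ma)$ in the inequality and the same equality criterion. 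The main delicate point, beyond routine bookkeeping, is the polynomiality of $\ell_A((\q^nM :_M a)/(\q^{n-c}M + 0:_Ma))$ for $n \gg 0$; this is not obvious a priori, but falls out of the master identity since all remaining terms are polynomial.
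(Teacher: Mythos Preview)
Your proposal is correct and follows essentially the same route as the paper. Both arguments arrive at the identity
\[
\ell_A(M/(a,\q^n)M) = \bigl[\ell_A(M/\q^nM) - \ell_A(M/\q^{n-c}M)\bigr] + \ell_A\bigl((\q^nM:_Ma)/\q^{n-c}M\bigr),
\]
split the colon term via the intermediate submodule $\q^{n-c}M + 0:_Ma$, invoke Artin--Rees for the $0:_Ma$ piece, and compare leading coefficients. The only cosmetic difference is that the paper phrases the first step as the Euler characteristic of the two-term complex $\mathcal{L}_\bullet(a,\q,M;n)$ (consistent with its general framework), whereas you obtain it directly from the exact sequence $0 \to aM/(aM\cap\q^nM) \to M/\q^nM \to (M/aM)/\q^n(M/aM) \to 0$ together with the isomorphism $aM/(aM\cap\q^nM)\cong M/(\q^nM:_Ma)$; these are the same computation.
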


\begin{proof}
	Note that we have the following complex
	\[
	\mathcal{L}_{\bullet}(a,\q,M;n) : 0 \to M/\q^{n-c}M \stackrel{a} \to M/ \q^nM \to 0,
	\]
	and hence
	\[
	(\star) \hspace{.5cm} \ell_A(M/ \q^nM) - \ell_A(M/\q^{n-c}M) = \ell_A(L_0(a,\q,M;n)) - \ell_A(L_1(a,\q,M;n)),
	\]
	where $L_0(a,\q,M;n) \cong M / (a,\q^n)M$ and $L_1(a,\q,M;n) \cong \q^nM : a / \q^{n-c}M$. We break\\ $\ell_A(\q^nM : a / \q^{n-c}M)$ by using the following short exact sequence:
	\[
	0 \to (\q^{n-c}M + 0:_Ma) / \q^{n-c}M \to \q^nM : a / \q^{n-c}M \to \q^nM :a / (\q^{n-c}M + 0:_Ma) \to 0.
	\]
	Also, by Artin-Rees we have
	\[
	(\q^{n-c}M + 0:_Ma) / \q^{n-c}M = 0:_M a / \q^{n-c}M \cap 0:_Ma = 0:_Ma / \q^{n-c-l}(\q^lM \cap 0:_M a)
	\]
	for some $l \in \mathbb{N}$ and for all $n \geq l$.
	That is,
	\begin{gather*}
	\ell_A(\q^nM : a / \q^{n-c}M) = \ell_A(\q^lM \cap 0:_M a / \q^{n-c-l}(\q^lM \cap 0:_M a))\\ + \ell_A(\q^nM :a / (\q^{n-c}M + 0:_Ma)) + \ell_A(0:_M a / 0:_M a \cap \q^lM).
	\end{gather*}
	By using last equation into $(\star)$, we get
	\begin{gather*}
	\ell_A(M/ \q^nM) - \ell_A(M/\q^{n-c}M) = \ell_A(M / (a,\q^n)M) -  \ell_A(\q^lM \cap 0:_M a / \q^{n-c-l}(\q^lM \cap 0:_M a))\\ - \ell_A(\q^nM :a / (\q^{n-c}M + 0:_Ma)) - \ell_A(0:_M a / 0:_M a \cap \q^lM),
	\end{gather*}
	where all lengths involved are polynomials for $n \gg 0$ with $\deg(\ell_A(M/ \q^nM) - \ell_A(M/\q^{n-c}M)) = \deg \ell_A(M / (a,\q^n)M) = d-1$, $\deg \ell_A(\q^lM \cap 0:_M a / \q^{n-c-l}(\q^lM \cap 0:_M a)) = \dim 0:_M a \leq d-1$, $\deg \ell_A(\q^nM :a / (\q^{n-c}M + 0:_Ma)) \leq d-1$ and $\ell_A(0:_M a / 0:_M a \cap \q^lM)$ is a constant. Also, leading terms of $\ell_A(M/ \q^nM) - \ell_A(M/\q^{n-c}M)$ and $\ell_A(M / (a,\q^n)M)$ are $c \cdot e_0(\q;M)$ and $e_0(\q;M/aM)$ respectively for all $n \gg 0$. Now, in case of (1), we get
	\[
	c\cdot e_0(\q;M) \leq e_0(\q;M/aM),
	\]
	and in case of (2), we get
	\[
	c\cdot e_0(\q;M) + e_0(\q;0:_M a) \leq e_0(\q;M/aM).
	\]
	Note that leading term of $\ell_A(\q^lM \cap 0:_M a / \q^{n-c-l}(\q^lM \cap 0:_M a))$ is $e_0(\q; \q^lM \cap 0:_Ma),$
	which is equal to $e_0(\q;0:_Ma)$. Indeed, we have the following short exact sequence
	\[
	0 \to \q^lM \cap 0:_M a \to 0 :_M a \to 0 :_M a / (\q^lM \cap 0:_M a) \to 0,
	\]
	and $\dim 0 :_M a / (\q^lM \cap 0:_M a) = 0$ whereas $\dim 0 :_M a = \dim (\q^lM \cap 0 :_M a)$. Therefore $e_0(\q;0:_Ma) = e_0(\q; \q^lM \cap 0:_Ma)$, cf. \cite[Theorem 13.3]{hM}. 
	Finally, equality in both cases occur if and only if $\deg \ell_A(\q^nM :a / (\q^{n-c}M + 0:_Ma)) \leq d-2$ for all $n \gg 0$.
\end{proof}
	
There is the following consequence of previous lemma.

\begin{proposition}\label{euler2}
	With the previous notations, let $\au=a_1,\ldots,a_d$ be a system of parameters of $M$, then the following holds.
	\begin{itemize}
		\item [(1)] If $\dim 0:_M a \leq d-2$, then $\chi_A(\au,\q,M) \geq \chi_A(\au',\q,M/a_1M)$, where $\au'=a_2,\ldots,a_d$. Moreover, equality occurs if and only if 
		\[
		\deg \ell_A(\q^nM :a_1 / (\q^{n-c_1}M + 0:_Ma_1)) \leq d-2
		\]
		for all $n \gg 0$.
		\item [(2)] If $\dim 0:_M a = d-1$, then $\chi_A(\au,\q,M) + \chi_A(\au',\q,0:_Ma_1) \geq \chi_A(\au',\q,M/a_1M)$, where $\au'=a_2,\ldots,a_d$. Moreover, equality occurs if and only if 
		\[
		\deg \ell_A(\q^nM :a_1 / (\q^{n-c_1}M + 0:_Ma_1)) \leq d-2
		\]
		for all $n \gg 0$.
	\end{itemize}
\end{proposition}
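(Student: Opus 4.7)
The plan is to express every Euler characteristic in the statement via the standard identity
\[
\chi_A(\underline{a}, \q, N; n) \;=\; e_0(\underline{a}; N) - c_1 \cdots c_d \cdot e_0(\q; N) \quad \text{for } n \gg 0,
\]
recalled earlier in this section, and then combine the resulting arithmetic with Lemma \ref{euler1}. Write $c' = c_2 \cdots c_d$ and apply this identity to each of the three triples $(\au, \q, M)$, $(\au', \q, M/a_1 M)$, and (in case (2)) $(\au', \q, 0 :_M a_1)$.

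The second ingredient is the classical reduction formula for multiplicities along the first element of a system of parameters: from additivity of Euler characteristics on the short exact sequence $0 \to 0 :_M a_1 \to M \xrightarrow{a_1} M \to M/a_1 M \to 0$, or equivalently from the mapping cone of the Koszul complex in the first variable, one obtains
\[
e_0(\au; M) \;=\; e_0(\au'; M/a_1 M) - e_0(\au'; 0 :_M a_1),
\]
with the convention that the second summand is $0$ whenever $\dim (0 :_M a_1) \leq d - 2$. A quick check confirms that $\au'$ generates an ideal of definition on $0 :_M a_1$ in case (2): $a_1$ annihilates $0 :_M a_1$, this module has dimension $d - 1$, and $\au$ is a system of parameters for $M$.

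Substituting these identities into the three Euler characteristics and cancelling the $e_0(\au'; M/a_1M)$ and $e_0(\au'; 0 :_M a_1)$ contributions, both inequalities collapse to
\[
c' \bigl(e_0(\q; M/a_1 M) - c_1 e_0(\q; M)\bigr) \geq 0
\]
in case (1), respectively
\[
c' \bigl(e_0(\q; M/a_1 M) - c_1 e_0(\q; M) - e_0(\q; 0 :_M a_1)\bigr) \geq 0
\]
in case (2). These are exactly parts (1) and (2) of Lemma \ref{euler1} multiplied by the positive factor $c' > 0$. The equality clauses in Lemma \ref{euler1} transfer verbatim, producing the degree criterion $\deg \ell_A(\q^n M :_M a_1 / (\q^{n - c_1} M + 0 :_M a_1)) \leq d - 2$ in either case.

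I expect no serious obstacle beyond careful bookkeeping: tracking which $c_i$'s survive in each cancellation, confirming that the dimension convention for $e_0(\au'; 0 :_M a_1)$ truly makes the case (1) computation fall out of the case (2) one, and verifying the hypothesis of Lemma \ref{euler1} literally applies to $a = a_1$ with $c = c_1$.
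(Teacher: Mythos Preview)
Your proposal is correct and follows essentially the same route as the paper: both arguments invoke the identity $\chi_A = e_0(\au;-) - c\cdot e_0(\q;-)$, the reduction formula $e_0(\au;M) + e_0(\au';0:_Ma_1) = e_0(\au';M/a_1M)$, and then Lemma~\ref{euler1} (multiplied through by $c' = c_2\cdots c_d$) to obtain the inequality and its equality criterion. The only difference is that you spell out the cancellation explicitly, whereas the paper compresses it into the phrase ``hence by definition.''
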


\begin{proof}
	Note that $\au'=a_2,\ldots,a_d$ is a system of parameters for both $A$-modules $M /a_1M$ and $0:_M a_1$. Also, it is a well known fact that
	\[
	e_0(\au;M) + e_0(\au'; 0:_Ma_1)=e_0(\au';M/a_1M),
	\]
	see for example \cite{BH}. For (1), we use Lemma \ref{euler1}(1) and get $c \cdot e_0(\q;M) \leq c_2\cdot \ldots \cdot c_d \cdot e_0(\q;M/a_1M)$, where $c=c_1\cdot \ldots \cdot c_d$. Hence by definition
	\[
	\chi_A(\au,\q,M) \geq \chi_A(\au',\q,M/a_1M),
	\]
	where the equality occurs if and only if $\deg \ell_A(\q^nM :a / (\q^{n-c_1}M + 0:_Ma_1)) \leq d-2$ for all $n \gg 0$.
	
	For (2),  we use Lemma \ref{euler1}(2) and get 
	\[
	c \cdot e_0(\q;M) + c_2\cdot \ldots \cdot c_d\cdot e_0(\q;0:_Ma_1) \leq c_2\cdot \ldots \cdot c_d \cdot e_0(\q;M/a_1M).
	\]
	Hence by definition
	\[
	\chi_A(\au,\q,M) + \chi_A(\au',\q,0:_Ma_1) \geq \chi_A(\au',\q,M/a_1M),
	\]
	where the equality occurs if and only if $\deg \ell_A(\q^nM :a / (\q^{n-c_1}M + 0:_Ma_1)) \leq d-2$ for all $n \gg 0$.
\end{proof}

We finish with the following remark.

\begin{remark}\label{euler3}
	Lemma \ref{euler1}, with slight modification, originally proved by Flenner-Vogel \cite{FV}. More precisely, they proved the equality in lemma if and only if $a^\star$ is a parameter for $G_M(\q)$. The author of present note tried to prove directly that $\deg \ell_A(\q^nM :a / (\q^{n-c_1}M + 0:_Ma)) \leq d-2$ for all $n \gg 0$ if and only if $a^\star$ is a parameter for $G_M(\q)$. The "if" part is easy. Indeed, $\deg \ell_A(\q^nM :a / (\q^{n-c_1}M + 0:_Ma)) \leq d-2$ for all $n \gg 0$ implies that 
	\[
	\deg \ell_A(\q^nM : a / \q^{n-c_1}M + (\q^{n+1}M : a)) \leq d-2 \text{ for all } n \gg 0,
	\]	
	where
	\[
	\q^nM : a / \q^{n-c_1}M + (\q^{n+1}M : a) \cong [\ker (G_M(\q) / a^\star G_M(\q) \to G_{M/aM}(\q))]_n.
	\]
	But, this is equivalent to $\dim (\ker (G_M(\q) / a^\star G_M(\q) \to G_{M/aM}(\q))) \leq d-1$ which is equivalent to the fact that $a^\star$ is a parameter for $G_M(\q)$.
	\end{remark}

{\bf Acknowledgement:} The author is grateful to the reviewer for comments and suggestions.

\end{document}